\newcommand{\GG}{{\cal G}}
\newtheorem{theorem}{Theorem}
\newtheorem{corollary}[theorem]{Corollary}
\newtheorem{lemma}[theorem]{Lemma}
\newtheorem{question}{Question}
\DeclareMathOperator{\Pee}{\mathbf{Pr}}
\DeclareMathOperator{\col}{col}
\DeclareMathOperator{\fan}{fan}
\newenvironment{tikzgraph}
  {\begin{tikzpicture}
      [vertex/.style={circle, draw=black, fill=black, inner sep=0.5pt, minimum
        size=6pt},
       edge/.style={thick},%
      ]\begin{scope}}
  {\end{scope}\end{tikzpicture}}
\title{On generalized choice and coloring numbers\thanks{This work falls within the scope of L.I.A. STRUCO.}}
\author{Zden\v{e}k Dvo\v{r}\'ak\thanks{Charles University, Prague, Czech Republic.
E-mail: {\tt rakdver@iuuk.mff.cuni.cz}.  Supported by project 17-04611S (Ramsey-like aspects of graph
coloring) of Czech Science Foundation.}
\and Jakub Pek\'arek\thanks{Charles University, Prague, Czech Republic.
E-mail: {\tt pekarej@iuuk.mff.cuni.cz}.}
\and Jean-S\'ebastien Sereni\thanks{Centre National de la Recherche Scientifique, CSTB (ICube), Strasbourg, France.
E-mail: \texttt{sereni@kam.mff.cuni.cz}. This work was partially supported by A.N.R. Project STINT and P.H.C. Barrande~40625WH.}}
\date{}
\begin{document}
\maketitle

\begin{abstract}
A well-known result of Alon shows that the coloring number of a graph is bounded by a function
of its choosability.  We explore this relationship in a more general setting with relaxed assumptions
on color classes, encoded by a graph parameter.
\end{abstract}

There exist countless variations of proper graph colorings, where the constraints on the structure
of the color classes are either relaxed, stronger or simply different. In other words, instead of
requiring color classes to be independent sets, one can require them to have maximum degree,
or tree-width, or component sizes bounded from above by a fixed parameter. This article contributes
to an effort toward unifying our understanding of such variants of graph coloring.

A coloring of a graph~$G$ is \emph{proper} if adjacent vertices receive distinct colors, and the \emph{chromatic number}~$\chi(G)$ of~$G$
is the least integer~$s$ for which~$G$ admits a proper coloring using~$s$ different colors.
A \emph{list assignment} for~$G$ is a function~$L$ that to each vertex assigns a set of colors.
It is an \emph{$s$-list assignment} if $|L(v)|\ge s$ for each vertex~$v\in V(G)$.
An \emph{$L$-coloring} is a coloring~$\varphi$ of~$G$ such that~$\varphi(v)\in L(v)$ for all~$v\in V(G)$.
An $L$-coloring is \emph{proper} if no two adjacent vertices have the same color.
The \emph{choosability}~$\chi^\ell(G)$ of~$G$ is the least integer~$s$ such that~$G$ has a proper $L$-coloring for every
$s$-list assignment~$L$. The \emph{coloring number}~$\col(G)$ of~$G$ is the least integer~$s$
such that every subgraph of~$G$ contains a vertex of degree less than~$s$.  A straightforward greedy argument shows
that $\chi^\ell(G)\le \col(G)$. While the gap between the chromatic number and the coloring number can be arbitrarily large
--- there are $\Delta$-regular bipartite graphs for every integer~$\Delta$ ---
Alon~\cite{Alo00} proved that the same is not true regarding choosability:
the coloring number of a graph is bounded by an exponential function of its choosability, which can be seen
as a weak converse of the previous upper bound.

One can equivalently define a proper coloring~$\varphi$ as one in which, for every color~$c$, its color class $\varphi^{-1}(c)$
induces an independent set in~$G$.  A number of relaxations of this concept have been studied, requiring instead that the color classes induce
subgraphs with bounded maximum degree~\cite{CCW86,CGJ97b,CuKi10,FrHe94,HaSe06,KaMc10,Skr99b,Woo04}, bounded maximum component size~\cite{ADOV03,EsOc16,HST03} or bounded tree-width~\cite{BDN18,DDO+04}, for instance.
This suggests the following generalization, proposed by Dvo\v r\'ak and Norin~\cite{DvNo17}.  Let~$f$ be a graph parameter, assigning to every
graph an element of~$\mathbb{N}\cup\{\infty\}$,
such that isomorphic graphs are assigned the same value.  For an integer~$p$, a coloring of
a graph~$G$ is \emph{$(f,p)$-proper} if $f(G[\varphi^{-1}(c)])\le p$ for each
color~$c$.  We can now naturally define~$\chi_{f,p}(G)$ as the least number~$s$ of
colors in an $(f,p)$-proper coloring of~$G$ and $\chi^{\ell}_{f,p}(G)$ as the
least integer~$s$ such that~$G$ has an $(f,p)$-proper $L$-coloring for every
$s$-list assignment~$L$ of~$G$, or~$\infty$ if no such integer~$s$ exists.
For example, if $f(G)=\Delta(G)$ then $\chi_{f,p}$ is the defective
chromatic number with defect~$p$ as introduced by Cowen, Cowen and Woodall~\cite{CCW86}.
Regarding defective colorings (sometimes called improper colorings) as well as clustered
colorings (which both fall in the scope of our work), the reader is referred to the recent
survey of Wood~\cite{Woo17}.

An analogue to the coloring number relevant in this context was introduced by Esperet and Ochem~\cite{EsOc16}.
Given a positive integer~$s$, an \emph{$s$-island} in a graph~$G$
is a non-empty subset~$I$ of vertices of~$G$ such that each vertex in~$I$ has less than~$s$ neighbors (in~$G$) not belonging to~$I$.
We define~$\col_{f,p}(G)$ as the least integer~$s$ such that for every induced subgraph~$H$ of~$G$, the graph~$H$ contains an $s$-island~$I$ satisfying 
$f(H[I])\le p$.  In particular, for every graph~$G$ we have $\col_{f,p}(G)=1$ if and only if~$f(H)\le p$ for every connected induced subgraph~$H$ of~$G$.
Under reasonable assumptions on~$f$, the invariant~$\col_{f,p}(G)$ is an upper bound on~$\chi^{\ell}_{f,p}(G)$.

A parameter~$f$ is \emph{hereditary} if $f(H)\le f(G)$ whenever~$H$ is an induced subgraph of~$G$,
and~$f$ is \emph{connected} if $f(G)=\max(f(G_1),f(G_2))$ whenever~$G$ is the disjoint union of the graphs~$G_1$ and~$G_2$.
\begin{lemma}\label{lemma:colboundsch}
If~$f$ is a connected and hereditary parameter, then $\chi^{\ell}_{f,p}(G)\le \col_{f,p}(G)$ for every graph~$G$ and every integer~$p$.
\end{lemma}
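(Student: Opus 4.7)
The plan is to proceed by induction on $|V(G)|$, with the empty graph as a trivial base case. Set $s\colonequals\col_{f,p}(G)$ and fix an $s$-list assignment~$L$; the goal is to produce an $(f,p)$-proper $L$-coloring of~$G$. Taking $H=G$ in the definition of~$\col_{f,p}$, I obtain a (nonempty) $s$-island~$I\subseteq V(G)$ with $f(G[I])\le p$. Every induced subgraph of~$G-I$ is an induced subgraph of~$G$, so $\col_{f,p}(G-I)\le s$; by the induction hypothesis there is an $(f,p)$-proper $L$-coloring~$\varphi'$ of~$G-I$ (this step is vacuous if $I=V(G)$).

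The next step is to extend~$\varphi'$ to~$I$. For each $v\in I$, since~$I$ is an $s$-island the set $N_v$ of colors assigned by~$\varphi'$ to neighbors of~$v$ in $V(G)\setminus I$ satisfies $|N_v|<s\le|L(v)|$, so I may pick an arbitrary color $\varphi(v)\in L(v)\setminus N_v$. Combining this with~$\varphi'$ defines an $L$-coloring~$\varphi$ of~$G$.

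It remains to verify that $\varphi$ is $(f,p)$-proper. Fix a color~$c$, and split $\varphi^{-1}(c)=A\cup B$ with $A\colonequals\varphi^{-1}(c)\cap I$ and $B\colonequals(\varphi')^{-1}(c)$. The crucial point --- and the one to highlight --- is that there are no edges of~$G$ between~$A$ and~$B$: by construction, $\varphi(v)\notin N_v$ for every $v\in A$, so no vertex of~$A$ has a neighbor in~$V(G)\setminus I$ colored~$c$, and in particular none in~$B$. Therefore $G[\varphi^{-1}(c)]$ is the disjoint union of $G[A]$ and $G[B]$. Since $f$ is connected, $f(G[\varphi^{-1}(c)])=\max\bigl(f(G[A]),f(G[B])\bigr)$. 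The inductive hypothesis gives $f(G[B])\le p$; and since $A\subseteq I$ and $f$ is hereditary, $f(G[A])\le f(G[I])\le p$. Hence $f(G[\varphi^{-1}(c)])\le p$, completing the induction.

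The main obstacle is conceptual rather than technical: one must realize that the definition of an $s$-island together with $|L(v)|\ge s$ leaves at least one color available to every $v\in I$ that is \emph{not already used on the outside}, which is exactly what ensures that each color class decomposes as a disjoint union between its $I$-part and its $(V(G)\setminus I)$-part. Once that is identified, the hereditary and connected assumptions on~$f$ do the rest.
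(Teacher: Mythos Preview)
Your proof is correct and follows essentially the same approach as the paper: induct on $|V(G)|$, strip off an $s$-island $I$ with $f(G[I])\le p$, color $G-I$ by induction, extend greedily to $I$ avoiding colors of outside neighbors, and then use the absence of monochromatic edges across $I$ together with connectedness and heredity of $f$ to bound each color class. The only cosmetic difference is that the paper phrases the key observation as ``every connected component of a color class is either contained in $I$ or disjoint from $I$,'' while you state directly that the color class is a disjoint union of its $I$-part and its complement-part.
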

\begin{proof}
Let us set~$s \coloneqq \col_{f,p}(G)$ and fix an $s$-list assignment~$L$ for~$G$.
We proceed by induction on the number of vertices of~$G$.
If~$G$ is the null graph (with no vertices), then the statement is trivially true.
Otherwise let~$I$ be an $s$-island in~$G$ satisfying $f(G[I])\le p$ and let~$G' \coloneqq G-I$. It
follows from the induction hypothesis that $\chi^\ell_{f,p}(G')$ is at most~$\col_{f,p}(G')$,
which itself is at most~$s$ by the definition of $\col_{f,p}$. Hence, there exists an
$(f,p)$-proper $L$-coloring~$\psi$ of~$G'$. We extend~$\psi$ to~$G$ by
assigning to each vertex in~$I$ an arbitrary color from its list that is not
used on its neighbours that belong to~$V(G')$, each vertex in~$I$ having at least one
such color since $I$ is an $s$-island. We observe that this yields an $(f,p)$-proper $L$-coloring
of~$G$. Indeed, let~$H$ be a subgraph of~$G$ induced by the vertices colored
with an arbitrary color~$c$.  It follows from the way~$\psi$ was extended
that every connected component of~$H$ is either contained in~$I$ or disjoint
from~$I$.  Since~$f$ is connected, $f(H)=\max\{f(H[I]),f(H - I)\}$. The definition
of~$\psi$ ensures that $f(H - I) \leq p$ and since~$f$ is hereditary, $f(H[I])$ is at most~$f(G[I])$,
which itself is at most~$p$.  This concludes the proof.
\end{proof}

It is natural to ask whether a converse result, analogous to that of Alon for ordinary list coloring, holds
for this generalization.  More precisely, could it be the case that all parameters~$f$ (subject to some natural assumptions)
have the following property?
\begin{itemize}
\item[(CC)] For all integers~$p$ and~$s$, there exist two integers~$p'$ and~$s'$ such that
      $\col_{f,p'}(G)\le s'$ for every graph~$G$ satisfying $\chi^{\ell}_{f,p}(G)\le s$.
\end{itemize}
\begin{figure}
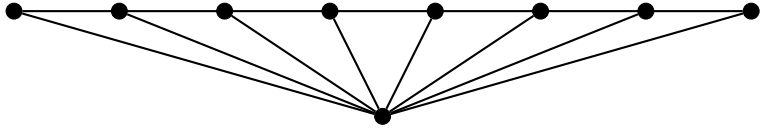
 
      \begin{center}
            \begin{tikzgraph}
\def\l{14mm}
\foreach \i in {0,...,7}
  \draw (\i*\l,\l) node[vertex](u\i){};

\draw (3.5*\l,0) node[vertex](v){};

\foreach \i in {0,...,7} 
  \draw[edge] (v)--(u\i);

\draw[edge] (u0)--(u7);
\end{tikzgraph}
      \end{center}
      \caption{A fan on nine vertices.}\label{fig-1}
\end{figure}
The answer turns out to be ``no'', even for an arguably reasonable graph parameter defined as follows.
A \emph{fan} is a graph consisting of a path and another vertex adjacent to all its vertices (see Figure~\ref{fig-1}).
For a graph~$G$, let~$\fan(G)$ be the maximum number of vertices of a fan appearing in~$G$ as a subgraph.
The parameter~$\fan$ is monotone (i.e., $\fan(H)\le\fan(G)$ for every subgraph~$H$ of~$G$) and connected.
Nevertheless, the following lemma shows that the parameter~$\fan$ does not satisfy~(CC).
The \emph{full join} of two graphs~$G$ and~$H$ is obtained from the disjoint union of~$G$ and~$H$ by adding an edge between every vertex of~$G$ and every vertex of~$H$.
\begin{lemma}\label{lemma:nofan}
      There exists a sequence~$(G_i)_{i\in\mathbb{N}}$ of graphs such that
$\chi^{\ell}_{\fan,2}(G_i)\le 2$ and $\col_{\fan,i}(G_i)\ge i+1$
      for every positive integer~$i$.
\end{lemma}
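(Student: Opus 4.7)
The plan is to define $G_i$ as the full join of an independent set $\{v_1,\ldots,v_i\}$ with a path $u_1 u_2 \cdots u_n$, where $n \coloneqq i^2$. The intuition is that each apex, together with a long sub-path of $u_\ell$'s, realizes a fan on many vertices, while the independence of the apexes together with the 2-choosability of paths makes the required list-colorings easy to produce.

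For $\chi^{\ell}_{\fan,2}(G_i)\le 2$, I would fix any 2-list-assignment $L$ and first color the path $u_1\cdots u_n$ with an $L$-coloring in which consecutive vertices receive distinct colors; this is possible because paths are 2-choosable (a routine left-to-right greedy argument). Then each apex $v_j$ is colored with an arbitrary color from $L(v_j)$. The only triangles in $G_i$ are of the form $\{v_j,u_\ell,u_{\ell+1}\}$, and since $u_\ell$ and $u_{\ell+1}$ receive different colors, no such triangle is monochromatic. Since every fan on at least three vertices contains a triangle, each color class satisfies $\fan\le 2$.

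For $\col_{\fan,i}(G_i)\ge i+1$, I would take the induced subgraph $H\coloneqq G_i$ itself and verify that every $i$-island $I$ of $H$ satisfies $\fan(H[I])\ge i+1$. Writing $U\coloneqq\{u_1,\ldots,u_n\}$, note first that $I$ must contain some apex $v_j$: otherwise any $u_\ell \in I$ would have all $i$ apexes outside $I$, contradicting the $i$-island condition. The chosen $v_j\in I$ has $n-|I\cap U|$ neighbors outside $I$, which must be less than $i$, so $|I\cap U|\ge n-i+1$; in other words, at most $i-1$ path vertices are missing from $I$, and $I\cap U$ induces at most $i$ sub-paths of the original path. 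By pigeonhole, the longest such sub-path has at least $\lceil (i^2-i+1)/i\rceil = i$ vertices; together with the apex $v_j$ (which is adjacent to every vertex of the sub-path), it realizes a fan on at least $i+1$ vertices in $H[I]$.

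The delicate point is calibrating the path length: one needs $n$ just large enough that, after removing up to $i-1$ vertices, some residual sub-path still has at least $i$ vertices, which drives the choice $n=i^2$. The requirement that the fan live inside $I$ (with its apex in $I$) is what motivates anchoring the fan at some $v_j\in I$; the presence of several apexes in $I$ is harmless for the lower bound, since a single apex in $I$ already forces $|I\cap U|\ge n-i+1$.
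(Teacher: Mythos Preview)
Your construction and both halves of the argument coincide with the paper's: the paper also takes $G_i$ to be the full join of an $i$-vertex independent set with an $i^2$-vertex path, properly $2$-list-colors the path and colors the apexes arbitrarily to get $\chi^{\ell}_{\fan,2}(G_i)\le 2$, and for $\col_{\fan,i}(G_i)\ge i+1$ argues (in contrapositive form) that any set $I$ with $\fan(G_i[I])\le i$ either misses all apexes or, containing an apex, must omit at least $i$ path vertices (one from each block of $i$ consecutive vertices), hence is not an $i$-island. Your pigeonhole on the $\le i$ residual sub-paths is the same counting in the other direction, so the approaches are essentially identical.
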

\begin{proof}
Let~$G_i$ be the full join of a path~$P_i$ and an independent set~$A_i$,
where~$P_i$ has~$i^2$ vertices and~$A_i$ has~$i$ vertices.  Given any $2$-list
assignment~$L$ for~$P_i$, it is possible to $L$-color~$P_i$ so that no edge is monochromatic.
This clearly prevents any monochromatic fan on more than two vertices
in~$G_i$, regardless of the coloring of~$A_i$.  Hence, $\chi^{\ell}_{\fan,2}(G_i)\le 2$.

It now suffices to show that every $i$-island of~$G_i$ contains a fan with more than~$i$ vertices.
Let~$I$ be a non-empty set of vertices of~$G_i$ such that $\fan(G_i[I]) \leq i$. We show that $I$ is not an $i$-island.
If~$I$ contains no vertex from~$A_i$, then
every vertex in~$I$ has at least~$i$ neighbours in~$V(G_i)\setminus I$
and therefore~$I$ is not an $i$-island. Now let~$v$ be a vertex in~$I \cap A_i$. Note that $I$ does not contain a set of~$i$ consecutive vertices from~$P_i$, as otherwise these vertices together with~$v$ would form a fan of~$G_i$, contradicting that
$\fan(G_i[I])\leq i$. Consequently, since $V(P_i)$ can be partitioned into $i$ vertex-disjoint sets of~$i$ consecutive vertices, we conclude that at least~$i$ vertices of~$P_i$ do not belong to~$I$.
Each of these~$i$ vertices is adjacent to~$v$, showing that $I$ is not an $i$-island. 
\end{proof}
While we were unable to fully describe the graph parameters satisfying~(CC), we provide two sufficient conditions
for a graph parameter to satisfy~(CC). These two conditions basically cover all parameters that have been studied in this setting.

One way to establish an upper bound on~$\col_{f,p}(G)$ is to show that the ordinary coloring number of~$G$ is bounded by some integer~$s$.
Then, each subgraph of~$G$ contains a vertex of degree less than~$s$ forming an $s$-island by itself, and thus if~$p$ is the value
of~$f$ on the single-vertex graph, it follows that $\col_{f,p}(G)\le s$.  This motivates the study of the following stronger property of a graph parameter~$f$.
\begin{itemize}
\item[(CD)] For all integers~$p$ and~$s$, there exists an integer~$s'$ such that $\col(G)\le s'$ for every graph~$G$
      satisfying $\chi^{\ell}_{f,p}(G)\le s$.
\end{itemize}
We characterize hereditary graph parameters having property~(CD) exactly. A graph parameter~$f$ \emph{bounds the average degree}
if there exists some function~$g\colon\mathbb{N}\to\mathbb{N}$ such that every graph~$G$ has average degree at most~$g(f(G))$.
\begin{theorem}\label{thm:cdchar}
A hereditary graph parameter satisfies~(CD) if and only if it bounds the average degree.
\end{theorem}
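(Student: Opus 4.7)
The plan is to prove the two implications of Theorem~\ref{thm:cdchar} separately. The forward direction is a direct contrapositive; the backward direction reduces to Alon's theorem through an intermediate bound on the ordinary list chromatic number, with the most delicate step being a combinatorial partitioning lemma for the color universe.

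For the forward direction, we argue by contrapositive. If $f$ fails to bound the average degree, then there exist an integer $p$ and a sequence $(H_n)_{n\in\mathbb{N}}$ of graphs with $f(H_n)\le p$ while their average degrees tend to infinity. The constant coloring of $H_n$ is $(f,p)$-proper because its unique color class is $H_n$ itself, and since this coloring is forced up to relabeling by any $1$-list assignment, we get $\chi^\ell_{f,p}(H_n)\le 1$. On the other hand, every graph of average degree $d$ has an induced subgraph of minimum degree at least $d/2$, so $\col(H_n)$ is at least half the average degree of $H_n$ plus one, and therefore tends to infinity. This contradicts (CD) applied with $s=1$.

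For the backward direction, suppose $f$ bounds the average degree via a function $g$, and fix $p$ and $s$; set $t\coloneqq g(p)+1$. We first observe that $\chi^\ell_{f,p}$ is monotone under induced subgraphs: given an induced subgraph $H$ of $G$ and an $s$-list assignment $L$ on $H$, extend $L$ to $G$ by giving each vertex of $V(G)\setminus V(H)$ a common list of $s$ new colors disjoint from all colors in $L$; using the hypothesis on $G$ together with the hereditariness of $f$, the restriction to $V(H)$ of any $(f,p)$-proper coloring of the extended list assignment is an $(f,p)$-proper $L$-coloring of $H$. We then claim that there is a function $F$ such that $\chi^\ell_{f,p}(G)\le s$ implies $\chi^\ell(G)\le F(s,t)$. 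Granted this claim, Alon's theorem (which supplies a function $A$ with $\col(G')\le A(\chi^\ell(G'))$ for every graph $G'$) yields $\col(G)\le A(F(s,t))$, establishing (CD).

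To prove the claim, fix a list assignment $L$ on $G$ with $|L(v)|\ge F$ for every $v$. The strategy has three steps. Step one: partition the color universe $\bigcup_v L(v)$ into $s$ disjoint blocks $B_1,\ldots,B_s$ so that $|L(v)\cap B_i|\ge t$ for every vertex $v$ and every index $i$. Step two: apply $\chi^\ell_{f,p}(G)\le s$ to the constant list assignment $\tilde L(v)=\{1,\ldots,s\}$ to obtain a partition $V_1\cup\cdots\cup V_s$ of $V(G)$ with $f(G[V_i])\le p$ for each $i$; by hereditariness of $f$ combined with its bounding the average degree, each $G[V_i]$ is $(t-1)$-degenerate, hence $t$-choosable. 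Step three: for each $i$, properly list-color $G[V_i]$ from the restricted lists $L(\cdot)\cap B_i$. Within-class edges are handled by the individual list colorings, and cross-class edges are non-monochromatic since the blocks $B_i$ are pairwise disjoint. The main obstacle is step one: finding the block partition of the color universe compatibly with $L$, with a bound $F$ depending only on $s$ and $t$ and not on $|V(G)|$. A naive union bound yields only $F=\Omega(\log|V(G)|)$; we plan to remove this dependence via the Lovász Local Lemma, applied after a preprocessing step bounding how many of the lists $L(v)$ any single color can belong to.
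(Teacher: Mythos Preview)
Your forward direction (that (CD) implies $f$ bounds the average degree) is correct and matches the paper.

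Your backward direction has a genuine gap, and in fact the strategy cannot succeed as written. Notice that the only place you invoke the hypothesis $\chi^\ell_{f,p}(G)\le s$ is in Step~2, and there you apply it to the \emph{constant} list assignment $\tilde L(v)=\{1,\dotsc,s\}$. So your argument would actually prove the stronger implication
\[
\chi_{f,p}(G)\le s \ \Longrightarrow\ \chi^\ell(G)\le F(s,t),
\]
and this is false. Take $f=\Delta$, $p=0$, $s=2$: then $\chi_{f,p}(G)\le 2$ just says $G$ is bipartite, yet complete bipartite graphs have unbounded choosability. Hence some step of your plan must fail, and it is Step~1.

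Concretely, the partition lemma you want in Step~1 is false for every value of $F$. With $s=2$ it asks for a $2$-coloring of the color universe so that every list $L(v)$ receives at least $t$ colors of each class; this is exactly a discrepancy/Property~B statement for the $F$-uniform hypergraph whose edges are the lists. If the color universe has $N>2(F-t)$ colors and $G$ has one vertex $v_S$ with $L(v_S)=S$ for every $F$-subset $S$ of the universe, then any bipartition $B_1\cup B_2$ has $\min(|B_1|,|B_2|)\le (N-1)/2<F-t+1$, hence some $S$ lies in the larger block except for fewer than $t$ elements, violating the requirement. The preprocessing you propose (bounding how many lists a single color lies in) cannot help here: each color lies in $\binom{N-1}{F-1}$ lists, and there is no legitimate way to reduce this multiplicity while keeping the lists of size $F$ and preserving the proper-coloring problem.

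The paper avoids this obstacle by not trying to bound $\chi^\ell(G)$ from an arbitrary list assignment at all. Instead it proves a direct strengthening of Alon's theorem (Theorem~\ref{thm:minav}): for every $k$ and $s$ there is $d=d(k,s)$ such that every graph of minimum degree at least $d$ admits a \emph{constructed} $s$-list assignment $L$ for which every $L$-coloring contains a monochromatic subgraph of average degree greater than $k$. Applying this with $k=g(p)$ to a subgraph of large minimum degree and using that $f$ is hereditary and bounds the average degree yields a monochromatic subgraph with $f$-value exceeding $p$, contradicting $\chi^\ell_{f,p}(G)\le s$. The essential difference from your plan is that the list assignment is built by the argument (so the full list hypothesis is genuinely used), rather than trying to handle an adversarial $L$.
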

By Theorem~\ref{thm:cdchar}, all the parameters that were mentioned before (namely, maximum degree, component size and treewidth)
have property~(CD), and thus also property~(CC).

Another graph parameter that comes to mind is the chromatic number.  Although it does not bound average degree
and consequently does not have property (CD), it does have property (CC), for a fairly trivial reason which is explained
by the following lemma, in a slightly more general setting --- and requiring only that $\chi_{f,p}$ is bounded, not
$\chi^\ell_{f,p}$.
\begin{lemma}\label{lemma:addit}
Let~$f$ be a hereditary graph parameter.  Assume furthermore that there exists a function~$g\colon\mathbb{N}^2\to \mathbb{N}$
such that  $f(G)\le g(f(G[X]),f(G-X))$ for every graph~$G$ and every set~$X\subseteq V(G)$.
Then for all integers~$p$ and~$s$, there exists an integer~$p'$ such that $\col_{f,p'}(G)=1$ for every graph~$G$
      satisfying $\chi_{f,p}(G)\le s$.
\end{lemma}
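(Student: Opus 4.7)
The plan is to reduce the statement to a single structural observation already noted in the excerpt, namely that $\col_{f,p'}(G)=1$ if and only if $f(H)\le p'$ for every connected induced subgraph~$H$ of~$G$. So it suffices to exhibit some $p'=p'(p,s)$ such that $f(H)\le p'$ for every connected induced subgraph $H$ of any $G$ with $\chi_{f,p}(G)\le s$.

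To this end I would first observe that $\chi_{f,p}$ is itself hereditary: since~$f$ is hereditary, the restriction of any $(f,p)$-proper coloring of~$G$ to an induced subgraph~$H$ is still $(f,p)$-proper, so $\chi_{f,p}(H)\le\chi_{f,p}(G)\le s$. Thus it is enough to prove the following purely local statement: if a graph~$H$ admits an $(f,p)$-proper coloring using at most~$s$ colors, then $f(H)\le p'$ for some $p'$ depending only on $p$ and $s$. Connectedness of~$H$ is then not even needed.

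The key step is an induction on~$s$, exploiting the subadditive assumption on~$f$. Define $h(p,1)\coloneqq p$ and $h(p,k)\coloneqq g(p,h(p,k-1))$ for $k\ge 2$. If $V(H)=V_1\cup\cdots\cup V_s$ is an $(f,p)$-proper coloring, take $X\coloneqq V_1$; then $f(H[X])\le p$ by hypothesis, and $H-X$ inherits an $(f,p)$-proper coloring using at most $s-1$ colors, so by induction $f(H-X)\le h(p,s-1)$. Applying the subadditive bound gives $f(H)\le g(p,h(p,s-1))=h(p,s)$. Setting $p'\coloneqq h(p,s)$ concludes the proof, in combination with the observation above.

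There is essentially no obstacle here: the argument is a routine iteration of the assumed $g$-bound, with the only mild point being the need to invoke hereditariness twice (once to pass from~$G$ to an arbitrary induced subgraph, and once implicitly to ensure that the restriction of the coloring to $V_2\cup\cdots\cup V_s$ remains $(f,p)$-proper, which holds because $H[V_i]$ is an induced subgraph of both $H$ and $H-V_1$). This explains why the lemma's hypotheses suffice without any list-coloring machinery, matching the excerpt's remark that property~(CC) is obtained here for a ``fairly trivial reason.''
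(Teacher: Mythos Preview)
Your proposal is correct and follows essentially the same approach as the paper: iterate the subadditive bound $f(G)\le g(f(G[X]),f(G-X))$ across the $s$ color classes to obtain the nested expression $p'=g(p,g(p,\dots,g(p,p)\dots))$. You are in fact slightly more careful than the paper, since you explicitly argue (via hereditariness of $\chi_{f,p}$) that the bound $f(H)\le p'$ holds for every induced subgraph~$H$ of~$G$, which is what is needed for $\col_{f,p'}(G)=1$; the paper's proof only writes out the bound for~$G$ itself and leaves this step implicit.
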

\begin{proof}
Let~$\psi\colon V(G)\to\{1,\dotsc,s\}$ be an $(f,p)$-proper coloring of~$G$.
      For every~$i\in\{1,\dotsc,s\}$,
      let~$H_i$ be the subgraph of~$G$ induced by the vertices colored~$i$,
      and let~$G_i$ be the subgraph of~$G$ induced by the vertices in~$\psi^{-1}(1)\cup\dotsb\cup\psi^{-1}(i)$; thus,
      $V(G_i)=V(H_1)\cup\dotsb\cup V(H_i)$.
      Since~$\psi$ is $(f,p)$-proper, $f(H_i) \leq p$ for every~$i\in\{1,\dotsc,s\}$. Our assumption on~$f$ implies that
      $f(G_i) \leq g(f(H_i),f(G_{i-1}))$ whenever~$i\in\{2,\dotsc,s\}$. Moreover, $f(G_1) \leq p$ and~$G_s = G$.
      Therefore $f(G) \leq g(p,g(p,\dots ,g(p,p)\dots))$ where the depth of recursion is~$s-1$. This implies
      that $\col_{f,p'}(G)=1$ if~$p'$ is the right side of the previous inequality. 
\end{proof}
Note that $\chi(G)\le \chi(G[X])+\chi(G-X)$, and thus Lemma~\ref{lemma:addit} applies when~$f$ is the chromatic number.

The bound on the coloring number ($s'$) from the property~(CD) that we obtain in the proof of Theorem~\ref{thm:cdchar} of course depends
on both~$s$ and~$p$.  In the property~(CC), we have the additional freedom of choosing the parameter~$p'$, and one might hope that
by choosing~$p'$ large enough depending on~$p$ and~$s$, it could be possible to obtain a bound on the number of colors ($s'$)
that depends only on~$s$ (or possibly even have $s'=s$).  This is not the case, as we show in Section~\ref{sec:cluster}
in the important special case of clustered coloring.  Before doing that, let us prove Theorem~\ref{thm:cdchar}.

\section{Condition~(CD) and parameters bounding the average degree}\label{sec:cdchar}

As we mentioned before, Alon~\cite{Alo00} proved that the coloring number of a graph is bounded
by a function of its choosability.  Theorem~\ref{thm:cdchar} is a consequence of the following strengthening of this
result.  For a graph~$H$, let~$\mathrm{mad}(H)$ be the maximum of the average degrees of subgraphs of~$G$.
In Theorem~\ref{thm:minav}, we prove that for any~$k$, the coloring number of a graph~$G$ is bounded by a function of $\chi^\ell_{\mathrm{mad},k}(G)$;
that is, we show that for any~$k$ and~$s$, if~$G$ is a graph of large enough minimum degree (compared to~$k$ and~$s$),
then there exists an $s$-list-assignment~$L$ such that any $L$-coloring of~$G$ contains a
monochromatic subgraph of average degree greater than~$k$.
We note that a similar strenghtening of Alon's result was obtained by Kang~\cite[Theorem~6]{Kan13} in the context of
defective colorings---he proved that for any~$k$, the coloring number of a graph~$G$ is bounded by a function of $\chi^\ell_{\Delta,k}(G)$.
Since $\mathrm{mad}(G)\le \Delta(G)$, this result is implied by our Theorem~\ref{thm:minav}; however, let us note that
our argument can be obtained from Kang's by a minor modification and that both arguments are a suitable modification of Alon's.

We need the following corollary of Chernoff's inequality (see for instance the book
by Frieze and Karo\'nski~\cite[Theorem~21.6, p.~414]{FrKa16}).

\begin{lemma}\label{lemma:chernoff}
Let~$X_1, \dotsc, X_n$ be independent random variables taking values in~$[0,1]$.
      Set $X=\sum_{i=1}^n X_i$, and let~$\mu$ be the expected value of~$X$.  Then
      \[\Pee[X\le \mu/2]\le e^{-\mu/8}.\]
\end{lemma}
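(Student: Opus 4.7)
The plan is to derive this bound as the $\delta=1/2$ case of the standard multiplicative Chernoff lower tail inequality: for independent $[0,1]$-valued $X_1,\dotsc,X_n$, $X=\sum_i X_i$, $\mu=\mathbf{E}[X]$, and any $\delta\in(0,1]$,
\[
\Pee[X\le(1-\delta)\mu]\le \exp(-\delta^2\mu/2).
\]
Substituting $\delta=1/2$ immediately gives $\Pee[X\le\mu/2]\le e^{-\mu/8}$. The task therefore reduces to establishing this multiplicative form.

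For that, I would apply the exponential moment method. Fix $t>0$ and apply Markov's inequality to $e^{-tX}$:
\[
\Pee[X\le(1-\delta)\mu] = \Pee[e^{-tX}\ge e^{-t(1-\delta)\mu}] \le e^{t(1-\delta)\mu}\,\mathbf{E}[e^{-tX}].
\]
By independence, $\mathbf{E}[e^{-tX}] = \prod_i \mathbf{E}[e^{-tX_i}]$. Convexity of $x\mapsto e^{-tx}$ on $[0,1]$ places the function below its chord, so $e^{-tX_i}\le 1-X_i(1-e^{-t})$ pointwise; taking expectations and then using $1+y\le e^y$ yields $\mathbf{E}[e^{-tX_i}]\le \exp(-\mathbf{E}[X_i](1-e^{-t}))$. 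Multiplying over~$i$ gives $\mathbf{E}[e^{-tX}]\le \exp(-\mu(1-e^{-t}))$, whence
\[
\Pee[X\le(1-\delta)\mu]\le \exp\bigl(\mu[t(1-\delta)-1+e^{-t}]\bigr).
\]

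Optimizing over $t>0$, the choice $t=-\ln(1-\delta)$ collapses the exponent to $\mu\bigl[-(1-\delta)\ln(1-\delta)-\delta\bigr]$. What remains is the elementary real-variable inequality
\[
-(1-\delta)\ln(1-\delta)-\delta \le -\delta^2/2 \qquad\text{for }\delta\in(0,1],
\]
which follows by expanding $(1-\delta)\ln(1-\delta)=-\delta+\sum_{k\ge 2}\delta^k/(k(k-1))$ and observing that the $k=2$ term alone already contributes $\delta^2/2$ while all further terms are nonnegative. I expect this final analytic verification to be the only step that requires any care; the rest of the argument is a direct invocation of the exponential moment method, and extracting the stated inequality from the multiplicative Chernoff bound is a one-line substitution.
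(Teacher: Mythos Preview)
Your derivation is correct: the exponential-moment method combined with the chord bound $e^{-tX_i}\le 1-X_i(1-e^{-t})$ for $[0,1]$-valued variables yields the multiplicative lower-tail Chernoff inequality, and the $\delta=1/2$ specialization gives exactly the stated bound. The series manipulation for the final analytic inequality is also sound.

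As for comparison: the paper does not prove this lemma at all. It is stated as a known corollary of Chernoff's inequality with a reference to Frieze and Karo\'nski's textbook, and is used as a black box in the subsequent probabilistic arguments. Your write-up therefore supplies strictly more than the paper does --- a full self-contained proof via the standard Cram\'er--Chernoff route --- whereas the paper simply imports the result from the literature. Either choice is appropriate here; your version makes the note self-contained at the cost of a few extra lines, while the paper's citation keeps the exposition focused on the new material.
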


We now establish two lemmas (Lemmas~\ref{lemma-exab} and~\ref{lemma:probsmall}),
which place us in a position to obtain the statement (Theorem~\ref{thm:minav}) desired
to prove Theorem~\ref{thm:cdchar}.

\begin{lemma}\label{lemma-exab}
Let~$s$ and~$k$ be positive integers, and let~$G$ be a graph of minimum degree at least~$d$.
Set~$n\coloneqq|V(G)|$ and~$S\coloneqq\{1,\dotsc,s^2\}$.  If $d\ge 2^{14}k^2s^44^s$, then there exist disjoint sets~$A,B\subset V(G)$
and a function~$L_0$ that assigns to each vertex of~$B$ a subset of~$S$ of size~$s$, such that the following
holds:
\begin{itemize}
\item[(a)] $|A|\ge n/2$;
\item[(b)] $|B|\le 2n/\sqrt{d}$; and
\item[(c)] for every set~$T\subseteq S$ of size~$\lceil s^2/2\rceil$, each vertex in~$A$ has
at least~$ks^2$ neighbors~$b$ in~$B$ such that $L_0(b)\subset T$.
\end{itemize}
\end{lemma}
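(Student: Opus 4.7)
The plan is to produce $B$ and $L_0$ by a random construction in the spirit of Alon's proof. Set $q \coloneqq 1/\sqrt{d}$, place each vertex of $G$ independently into a random set $B'$ with probability $q$, and for every $u \in B'$ assign $L_0(u)$ uniformly at random among the $s$-element subsets of $S$, independently of all other choices. I will then take $B \coloneqq B'$ and let $A$ consist of those vertices outside $B$ for which condition~(c) holds, and argue that with positive probability the triple $(A,B,L_0)$ satisfies all three requirements.

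Controlling $|B'|$ is routine: the minimum-degree hypothesis forces $n \ge d+1$, so the expectation $nq = n/\sqrt{d}$ is at least $\sqrt{d}$, which is very large under the assumption on $d$, and a standard upper-tail Chernoff bound makes $\Pee[|B'| > 2n/\sqrt{d}]$ negligible. The heart of the argument is an estimate for each fixed $v \in V(G)$ and each fixed $T \subseteq S$ of size $\lceil s^2/2 \rceil$: letting $X_{v,T}$ be the number of neighbors $u$ of $v$ with $u \in B'$ and $L_0(u) \subseteq T$, the variable $X_{v,T}$ is a sum of at least $d$ independent $\{0,1\}$-indicators with mean
\[
  \mu_{v,T} \;\ge\; d \cdot q \cdot \frac{\binom{\lceil s^2/2 \rceil}{s}}{\binom{s^2}{s}} \;\ge\; c \cdot \sqrt{d} \cdot 2^{-s}
\]
for some absolute constant $c>0$, after a short elementary estimate of the binomial ratio (this is where the $4^s$ factor in the hypothesis originates). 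The assumption $d \ge 2^{14} k^2 s^4 4^s$ is equivalent to $\sqrt{d} \ge 2^7 k s^2 \cdot 2^s$, so $\mu_{v,T}$ dominates both $2ks^2$ and $8(s^2+3)\ln 2$ by a comfortable margin, and Lemma~\ref{lemma:chernoff} gives
\[
  \Pee[X_{v,T} < ks^2] \;\le\; \Pee[X_{v,T} \le \mu_{v,T}/2] \;\le\; e^{-\mu_{v,T}/8}.
\]

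Call $v$ \emph{bad} if some $T$ of size $\lceil s^2/2 \rceil$ violates this lower bound. A union bound over the $\binom{s^2}{\lceil s^2/2 \rceil} \le 2^{s^2}$ choices of $T$ then gives $\Pee[v \text{ is bad}] \le 2^{s^2} e^{-\mu_{v,T}/8} \le 1/8$, so the expected number of bad vertices is at most $n/8$. By Markov's inequality, with probability at least $1/2$ fewer than $n/4$ vertices are bad. Intersecting with the (very likely) event $|B'| \le 2n/\sqrt{d}$ and using $2n/\sqrt{d} \le n/4$ (which holds because $d \ge 64$), both events occur simultaneously on a positive-probability set of outcomes. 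Fixing any such outcome and setting $A \coloneqq V(G) \setminus (B' \cup \{\text{bad vertices}\})$ delivers $|A| \ge n - n/4 - n/4 = n/2$, while condition~(c) holds by construction.

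The main technical hurdle is calibrating the three tolerances so that they can all be paid for by the single bound $d \ge 2^{14} k^2 s^4 4^s$: the Chernoff condition $\mu_{v,T} \ge 2ks^2$ produces the factors $k^2 s^4$ and $4^s$; the union bound over $T$ forces $\mu_{v,T}$ to also dominate $O(s^2)$, contributing to the absolute constant $2^{14}$; and the disjointness $A \cap B = \emptyset$ costs another $2n/\sqrt{d}$ vertices, which requires $d$ large in absolute terms. Apart from the elementary lower bound $\binom{\lceil s^2/2 \rceil}{s}/\binom{s^2}{s} \ge c \cdot 2^{-s}$, which is the step that pins down the $4^s$ in the hypothesis on $d$, all the remaining arithmetic is routine.
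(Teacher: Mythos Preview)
Your proposal is correct and follows essentially the same route as the paper: a random $B$ with inclusion probability $1/\sqrt{d}$, uniformly random $s$-subsets $L_0(b)\subseteq S$, Chernoff plus a union bound over the $\le 2^{s^2}$ sets $T$ to control each vertex's ``bad'' probability, and Markov to get at least $n/2$ good vertices. The only cosmetic differences are that the paper bounds $|B|$ via Markov rather than an upper-tail Chernoff, and it folds the event $v\in B$ into the definition of ``not good'' (so its threshold is $1/4$ rather than your $1/8$), neither of which changes the argument.
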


\begin{proof}
We consider a random subset~$B$ of~$V(G)$ obtained by placing each vertex~$v$ of~$V(G)$
in~$B$ with probability~$\frac{1}{\sqrt{d}}$, independently for each vertex.
Whenever a vertex~$v$ is placed in~$B$, we also choose at random a set~$L_0(v)$
uniformly among the subsets of~$S$ of size~$s$, independently for each such vertex.
By the linearity of expectation, the expected size of~$B$ is~$n/\sqrt{d}$, and by Markov's inequality,
the set~$B$ has size at most~$2n/\sqrt{d}$ with probability at least~$1/2$.

A vertex~$v\in V(G)$ is \emph{good} if firstly $v\notin B$ and secondly
for every subset~$T$ of~$S$ of size~$\lceil s^2/2\rceil$, the vertex~$v$
has at least~$ks^2$ neighbors~$b$ in~$B$ such that $L_0(b)\subset T$.
Note that for each vertex~$v$ and each fixed subset~$T\in\binom{S}{\lceil s^2/2\rceil}$,
the probability that a given neighbor of~$v$ belongs to~$B$ and has a list
contained in~$T$ is exactly
\[
p=\frac{1}{\sqrt{d}}\cdot\frac{\lceil s^2/2\rceil(\lceil s^2/2-1\rceil)\dotso(\lceil s^2/2\rceil -s+1)}{s^2(s^2-1)\dotso(s^2-s+1)}.
\]
It follows that the expected number of such neighbors of~$v$ is $p\cdot\deg_G(v)$.
Since all random choices are independent, Chernoff's bound (Lemma~\ref{lemma:chernoff}) ensures that the probability
that~$v$ has less than $p\cdot\deg_G(v)/2$ such neighbors is at most~$e^{-p\deg_G(v)/8}$.
Alon~\cite{Alo00} noted that
\begin{align}
\frac{\lceil s^2/2\rceil(\lceil s^2/2-1\rceil)\dotso(\lceil s^2/2\rceil -s+1)}{s^2(s^2-1)\dotso(s^2-s+1)}
      &\ge2^{-s}\prod_{i=0}^{s-1}\frac{s^2-2i}{s^2-i}\nonumber\\
      &=2^{-s}\prod_{i=0}^{s-1}\left(1-\frac{i}{s^2-i}\right)\nonumber\\
      &\ge2^{-s}\left(1-\frac{\sum_{i=0}^{s-1}i}{s^2-s}\right)\nonumber\\
      &=2^{-s-1}.\label{eq-estim}
\end{align}
This gives us $p\ge \frac{1}{\sqrt{d}}2^{-s-1}$, and thus
$p\deg(v)\ge pd\ge \sqrt{d}\cdot2^{-s-1}>2ks^2$. 
So we infer that
the probability that~$v$ has less than~$ks^2$ neighbors~$u$ in~$B$ with $L_0(u)\subset T$
is at most~$e^{-pd/8}\le e^{-\sqrt{d}\cdot2^{-s-4}}$. 
Consequently, for each vertex~$v\in V(G)$ the probability that~$v$ is
not good is at most
\[
      \frac{1}{\sqrt{d}}+\left(1-\frac{1}{\sqrt{d}}\right)\binom{s^2}{\lceil s^2/2\rceil}e^{-\sqrt{d}\cdot2^{-s-4}}.
\]
Since $\binom{s^2}{\lceil s^2/2\rceil}\le 2^{s^2}$, the probability
that an arbitrary vertex is not good is at most
\[
\frac{1}{\sqrt{d}}+2^{s^2}e^{-\sqrt{d}\cdot 2^{-s-4}}<\frac{1}{4}
\]
      by the assumptions on~$d$.
The expected number of vertices that are not good is thus less than~$\frac{n}{4}$, and
hence Markov's inequality yields that the probability that there are at least~$n/2$ non-good vertices
is less than~$1/2$.  Consequently, with probability greater than~$1/2$ there are at least~$n/2$
good vertices. It follows that there is a positive probability that simultaneously
$|B|\le 2n/\sqrt{d}$ and the number of good vertices is at least~$n/2$.

We fix a choice of~$B$ and of lists for the vertices in~$B$ such that $B$ has size at
most~$2n/\sqrt{d}$ and the set~$A$ of good vertices has size at least~$n/2$.  Hence, the
conditions (a) and (b) are satisfied, and (c) holds since $A$ consists of good vertices.
\end{proof}

Suppose that we are in the situation described in the statement of Lemma~\ref{lemma-exab}.
Let~$L_1$ be an assignment of lists to the vertices in~$A$.
Let~$\varphi$ be an $L_0$-coloring of~$G[B]$.  We define~$A_{\varphi,L_1}$
as the set of vertices~$v\in A$ such that for every color~$c\in L_1(v)$,
the vertex~$v$ has at least~$k$ neighbors~$u$ in~$B$ with~$\varphi(u)=c$.

\begin{lemma}\label{lemma:probsmall}
Let~$s$ and~$k$ be positive integers, and let~$G$ be a graph of minimum degree at least~$d$, where $d\ge 2^{14}k^2s^44^s$.
Set~$n\coloneqq|V(G)|$ and~$S\coloneqq\{1,\dotsc,s^2\}$.  Let~$A,B\subset V(G)$, and an assignment~$L_0$ of subsets of~$S$ of size~$s$ to
the vertices in~$B$ satisfy the conditions~(a), (b), and~(c) from the statement of Lemma~\ref{lemma-exab}.  There exists
an assignment~$L_1$ of subsets of~$S$ of size~$s$ to the vertices in~$A$ such that $|A_{\varphi,L_1}|>|B|$
for every $L_0$-coloring~$\varphi$ of~$G[B]$.
\end{lemma}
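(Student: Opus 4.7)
The plan is to take $L_1$ to be a random list assignment, picking $L_1(v)$ uniformly from $\binom{S}{s}$ independently for each $v\in A$, and to show that with positive probability $|A_{\varphi,L_1}|>|B|$ holds simultaneously for every $L_0$-coloring $\varphi$ of $G[B]$.

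Fix such a $\varphi$ and a vertex $v\in A$. Writing $n_c(v)$ for the number of neighbors of $v$ in $B$ colored $c$ by $\varphi$, and setting $C_v\coloneqq\{c\in S : n_c(v)\ge k\}$, observe that $v\in A_{\varphi,L_1}$ if and only if $L_1(v)\subseteq C_v$. The first step will be to show that $|C_v|\ge\lceil s^2/2\rceil$: assuming the contrary, one can pick $T\subseteq S\setminus C_v$ of size $\lceil s^2/2\rceil$ and apply condition~(c) to produce at least $ks^2$ neighbors of $v$ in $B$ whose $L_0$-list is contained in~$T$, and which are therefore colored in~$T$ by~$\varphi$; but each color in~$T$ contributes at most $k-1$ to this count, yielding at most $(k-1)\lceil s^2/2\rceil<ks^2$, a contradiction.

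The estimate~(\ref{eq-estim}) from the previous proof then yields $\Pee[L_1(v)\subseteq C_v]\ge 2^{-s-1}$, so that $\mathbb{E}[|A_{\varphi,L_1}|]\ge|A|\cdot 2^{-s-1}\ge n\cdot 2^{-s-2}$. Since the lists $L_1(v)$ are chosen independently, Chernoff's bound (Lemma~\ref{lemma:chernoff}) applied to $|A_{\varphi,L_1}|=\sum_{v\in A}\mathbf{1}[v\in A_{\varphi,L_1}]$ then gives
\[\Pee\!\left[|A_{\varphi,L_1}|\le n\cdot 2^{-s-3}\right]\le e^{-n\cdot 2^{-s-5}}.\]
A union bound over the at most $s^{|B|}$ many $L_0$-colorings of $G[B]$ (each $b\in B$ has $|L_0(b)|=s$ choices) bounds the probability that this bad event occurs for some~$\varphi$ by $s^{|B|}e^{-n\cdot 2^{-s-5}}$.

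The main obstacle will be calibrating the numerics so that the hypothesis $d\ge 2^{14}k^2s^44^s$, combined with $|B|\le 2n/\sqrt d$ from~(b), implies both (i) that $s^{|B|}e^{-n\cdot 2^{-s-5}}<1$, ensuring the union bound does not fail, and (ii) that $n\cdot 2^{-s-3}\ge|B|$, so that any favorable outcome of $L_1$ actually certifies $|A_{\varphi,L_1}|>|B|$ for every $\varphi$. Both reduce to an inequality of the form $\sqrt d\ge 2^{s+O(1)}\ln s$, which is immediate from $\sqrt d\ge 2^{7+s}ks^2$. Beyond this structural and numerical verification, the argument is a direct adaptation of Alon's probabilistic technique.
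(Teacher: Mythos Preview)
Your proposal is correct and follows essentially the same approach as the paper's own proof: random $L_1$, the pigeonhole argument via condition~(c) to show each $v\in A$ has at least $\lceil s^2/2\rceil$ ``good'' colors (your $C_v$ is the paper's $S\setminus X_v$), the estimate~\eqref{eq-estim} for the probability $L_1(v)\subseteq C_v$, Chernoff, and a union bound over the $s^{|B|}$ colorings, with the same numerical verification that $\sqrt d\ge 2^{7+s}ks^2$ suffices.
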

\begin{proof}
Choose for each vertex~$v\in A$ a set~$L_1(v)\subset S$ of size~$s$ uniformly at random,
each choice being independent of all the others.

Let~$\varphi$ be an arbitrary $L_0$-coloring of the vertices in~$B$.
For a vertex~$v$ in~$A$, let~$X_v$ be the set of colors that are
assigned by~$\varphi$ to at most~$k-1$ neighbors of~$v$ that belong to~$B$.
We assert that~$|X_v|<\lceil s^2/2\rceil$. Indeed, otherwise there exists a subset~$T$
of~$S$ of size~$\lceil s^2/2\rceil$ that is contained in~$X_v$.
According to the condition~(c) from the statement of Lemma~\ref{lemma-exab},
the vertex~$v$ has at least~$ks^2$ neighbors in~$B$ the lists of which are contained in~$T$.
By the pigeonhole principle, one of the (less than $s^2$) colors in~$T$ is assigned to more than~$k-1$ of these neighbors,
which contradicts the fact that~$T\subseteq X_v$. Therefore, $|X_v|<\lceil s^2/2\rceil$
for each vertex~$v\in A$.

This means that for every vertex~$v\in A$, the set $S\setminus X_v$ of colors that are
assigned by~$\varphi$ to at least~$k$ neighbors of~$v$ in~$B$ has size at least $s^2/2$.
If~$L_1(v)$ is a subset of $S\setminus X_v$, then no matter which color is chosen
for~$v$, there are at least~$k$ neighbors of~$v$ with the same color as~$v$.
The probability that $L_1(v)\subset S\setminus X_v$ is at least~$2^{-s-1}$ by the same calculation as in~\eqref{eq-estim}.

Recall that~$A_{\varphi,L_1}$ is the set of vertices~$v$ in~$A$ such that for every color~$c\in L_1(v)$,
the vertex~$v$ has at least~$k$ neighbors~$u$ in~$B$ with~$\varphi(u)=c$.
According to the previous paragraph and the condition~(a) from the statement of Lemma~\ref{lemma-exab}, the expected
size of the set~$A_{\varphi,L_1}$ is at least $|A|2^{-s-1}\ge n\cdot2^{-s-2}$.
Because all
choices for the lists of vertices in~$A$ are made independently, Chernoff's bound (Lemma~\ref{lemma:chernoff})
ensures that the probability that $|A_{\varphi,L_1}|\le n\cdot2^{-s-3}$
is at most~$e^{-n/2^{s+5}}$.  By the condition (b) from the statement of Lemma~\ref{lemma-exab} and the assumed lower bound on $d$, we have $|B|\le 2n/\sqrt{d}<n\cdot 2^{-s-3}$,
and thus the probability that $|A_{\varphi,L_1}|\le |B|$ is at most~$e^{-n/2^{s+5}}$.

There are $s^{|B|}$ $L_0$-colorings $\varphi$ of~$G[B]$, and thus the probability that $|A_{\varphi,L_1}|\le |B|$ for any of them is at most
\[s^{|B|}\cdot e^{-n/2^{s+5}} \le s^{2n/\sqrt{d}}\cdot e^{-n/2^{s+5}} = e^{-2n\left(2^{-s-6}-d^{-1/2}\ln s\right)},\]
which is less than~$1$ by the hypothesis on~$d$.  Hence, there exists a choice of~$L_1$ such that $|A_{\varphi,L_1}|>|B|$ for every $L_0$-coloring~$\varphi$ of~$G[B]$.
\end{proof}

\begin{theorem}\label{thm:minav}
Let~$s$ and~$k$ be positive integers, and let~$G$ be a graph of minimum degree at least~$d$.  If $d\ge 2^{14}k^2s^44^s$,
then $\chi^\ell_{\mathrm{mad},k}(G)>s$; that is, there exists an $s$-list assignment~$L$ for~$G$ such that any $L$-coloring
of~$G$ contains a monochromatic subgraph of average degree greater than~$k$.
\end{theorem}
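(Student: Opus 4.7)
The plan is to combine Lemmas~\ref{lemma-exab} and~\ref{lemma:probsmall} with a short double-counting argument. First I would apply Lemma~\ref{lemma-exab} to obtain disjoint sets $A, B \subseteq V(G)$ together with an assignment $L_0$ of $s$-subsets of $S \coloneqq \{1, \dotsc, s^2\}$ to the vertices of $B$ satisfying conditions~(a)--(c); the hypothesis on $d$ matches exactly. Then I would invoke Lemma~\ref{lemma:probsmall} to produce an assignment $L_1$ of $s$-subsets of $S$ to the vertices of $A$ such that $|A_{\varphi, L_1}| > |B|$ for every $L_0$-coloring $\varphi$ of $G[B]$. These combine into an $s$-list assignment $L$ on $V(G)$ by setting $L \coloneqq L_0$ on $B$, $L \coloneqq L_1$ on $A$, and extending $L$ on $V(G) \setminus (A \cup B)$ by any fixed $s$-subset of $S$.

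Next, I fix an arbitrary $L$-coloring $\varphi$ of $G$ and set $A' \coloneqq A_{\varphi|_B, L_1}$, so that $|A'| > |B|$. Because $\varphi(v) \in L_1(v)$ for every $v \in A'$, the definition of $A_{\varphi|_B, L_1}$ guarantees that each such $v$ has at least $k$ neighbors in $B$ also colored $\varphi(v)$. For each color $c \in S$, let $M_c$ be the bipartite subgraph of $G$ with vertex set $\varphi^{-1}(c) \cap (A' \cup B)$ and edges those of $G$ joining $A' \cap \varphi^{-1}(c)$ to $B \cap \varphi^{-1}(c)$. Then $M_c$ is a monochromatic subgraph of $G$ under $\varphi$; the vertex counts sum as $\sum_c |V(M_c)| = |A'| + |B|$, and the observation above gives $\sum_c |E(M_c)| \ge k|A'|$.

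A simple averaging then closes the argument. If every $M_c$ had average degree at most $k$, then
\[
2\sum_c |E(M_c)| \le k \sum_c |V(M_c)| = k(|A'| + |B|) < 2k|A'|,
\]
contradicting $\sum_c |E(M_c)| \ge k|A'|$. Hence some $M_c$ is a monochromatic subgraph of average degree greater than $k$, showing that $\varphi$ is not $(\mathrm{mad},k)$-proper; since $\varphi$ was arbitrary, $\chi^\ell_{\mathrm{mad}, k}(G) > s$, as claimed.

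I do not expect a genuine obstacle in this step: all the serious work lives in Lemmas~\ref{lemma-exab} and~\ref{lemma:probsmall}, and the present theorem is essentially a merger. The only mild subtlety is to arrange the double counting on the bipartite graphs $M_c$ so that the strict inequality $|A'| > |B|$ supplied by Lemma~\ref{lemma:probsmall} cleanly yields the required strict lower bound on the average degree of some color class.
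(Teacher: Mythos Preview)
Your proposal is correct and follows essentially the same approach as the paper: apply Lemmas~\ref{lemma-exab} and~\ref{lemma:probsmall}, extend $L_0$ and $L_1$ to an $s$-list assignment, and then exploit $|A_{\varphi,L_1}|>|B|$ via a counting argument on the bipartite monochromatic pieces between $A_{\varphi,L_1}$ and $B$. The only cosmetic difference is that the paper picks, by pigeonhole, a single color $c$ with $|A_c|>|B_c|$ and bounds the average degree of $G[A_c\cup B_c]$ directly, whereas you sum over all colors and derive a contradiction; these are equivalent formulations of the same double count.
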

\begin{proof}
Let~$G$ be a graph of minimum degree at least~$d\ge 2^{14}k^2s^44^s$.
Let~$S\coloneqq\{1,\dotsc,s^2\}$ be a set of colors, $n\coloneqq|V(G)|$, and let~$A,B\subset V(G)$ and~$L_0$ satisfy the conditions stated in Lemma~\ref{lemma-exab}.
Let~$L_1$ be an assignment of subsets of~$S$ of size~$s$ to the vertices in~$A$ such that
      $|A_{\varphi,L_1}|>|B|$ for every $L_0$-coloring~$\varphi$ of~$G[B]$,
which exists by Lemma~\ref{lemma:probsmall}.

Let~$L$ be any $s$-list assignment for~$G$ that extends~$L_0$ and~$L_1$.  We assert that any $L$-coloring of~$G$
contains a monochromatic subgraph of average degree greater than~$k$.  Indeed, consider such an $L$-coloring~$\psi$,
and let~$\varphi$ be the restriction of~$\psi$ to~$B$.
For each color~$c\in S$ set~$A_c\coloneqq \psi^{-1}(c)\cap A_{\varphi,L_1}$
and~$B_c\coloneqq \varphi^{-1}(c)\cap B$.
Since $|A_{\varphi,L_1}|>|B|$, there exists a color~$c\in S$ such that $|A_c|>|B_c|$.
Let~$H=G[A_c\cup B_c]$.  By the definition of~$A_{\varphi,L_1}$, each vertex~$v\in A_c$ has at least~$k$
neighbors in~$B_c$.  Let~$m$ be the number of edges of~$G$ between~$A_c$ and~$B_c$, so
      $m\ge k|A_c|$.
The average degree of the monochromatic subgraph~$H$ is $\tfrac{2|E(H)|}{|V(H)|}>\tfrac{2m}{2|A_c|}\ge k$,
as required.
\end{proof}

We are now ready to characterize the parameters satisfying~(CD).

\begin{proof}[Proof of Theorem~\ref{thm:cdchar}]
Let~$f$ be a hereditary graph parameter.  Suppose first that~$f$ bounds the average degree.
Let~$g\colon\mathbb{N}\to\mathbb{N}$ be a function such that every graph~$G$ has average degree at most~$g(f(G))$.
Without loss of generality, we can assume that~$g$ is non-decreasing.
Given positive integers~$p$ and~$s$, set~$k\coloneqq g(p)$ and~$s'\coloneqq 2^{14}k^2s^44^s$.  We assert that
$\col(G)\le s'$ for every graph~$G$ such that $\chi^{\ell}_{f,p}(G)\le s$.
Suppose, on the contrary, that $\col(G)\ge s'+1$, and thus~$G$ contains an induced subgraph~$G_0$
of minimum degree at least~$s'$.  Let~$L_0$ be an $s$-list assignment for~$G_0$ obtained using Theorem~\ref{thm:minav},
and let~$L$ be any extension of~$L_0$ to an $s$-list assignment of~$G$.
Since $\chi^{\ell}_{f,p}(G)\le s$, there exists an $(f,p)$-proper $L$-coloring $\varphi$ of~$G$.
Considering the restriction of~$\varphi$ to~$G_0$, Theorem~\ref{thm:minav} implies that~$G_0$ contains an induced subgraph~$H$
of average degree greater than~$k$ such that all vertices of~$H$ have the same color~$c$.
Since~$f$ bounds the average degree, $g(f(H))>k=g(p)$, and since~$g$ is non-decreasing, it follows that $f(H)>p$.
But since $f$ is hereditary, $f(G[\varphi^{-1}(c)])\ge f(H)>p$, contradicting the fact that~$\varphi$ is $(f,p)$-proper.
This contradiction implies that $\col(G)\le s'$, and thus~$f$ satisfies~(CD).

Suppose now that $f$ does not bound the average degree, and thus there exists some integer~$p$ and a sequence of
graphs~$(G_i)_{i\in\mathbb{N}}$
such that for every~$i$, the graph~$G_i$ has average degree at least~$i$ and $f(G_i)\le p$.
      Notice that any graph~$H$ has less than~$\col(H)|V(H)|$ edges, and thus average degree less than~$2\col(H)$.
It follows that $\col(G_i)>i/2$. Since $f$ is hereditary and $f(G_i)\le p$, we conclude that any coloring of~$G_i$ is $(f,p)$-proper,
and thus $\chi^{\ell}_{f,p}(G_i)=1$.  We deduce that $f$ does not satisfy~(CD) even if~$s$ is fixed to be~$1$.
\end{proof}

\section{Clustered coloring}\label{sec:cluster}

Let~$\star(G)$ be the maximum of the orders of the components of the graph~$G$. The parameter~$\chi_{\star,p}$ has been intensively
studied under the name \emph{clustered coloring}, and is among the most natural relaxations of the chromatic number.
As for some other variants (e.g., defective coloring), clustered coloring specialises to the usual notion
of vertex coloring: $\chi_{\star,1}(G)=\chi(G)$.
Clustered colorings appeared in a variety of contexts and it seems that the first published work is one dealing
with databases~\cite{KMRV97}.

The parameter~$\star$ bounds the average degree, since the average degree of a
graph~$G$ is at most~$\star(G)-1$. Therefore, the proof of Theorem~\ref{thm:cdchar} implies
that $\col_{\star,1}(G)=\col(G)\le 2^{14}p^2s^44^s$ for
every graph~$G$ such that $\chi^{\ell}_{\star,p}(G)\le s$.  In the context of the property~(CC), it is natural to ask whether it is
possible to bound~$\col_{\star,p'}(G)$ for a sufficiently large value of~$p'=p'(p,s)$ by a function depending only on~$s$
(i.e., whether all effects of allowing large clusters for the list chromatic number cannot be absorbed by allowing even larger
clusters for the coloring number).  Alternately, one can wonder whether allowing large values for~$p'$ cannot enable us to substantially
improve the dependence of~$\col_{\star,p'}(G)$ on~$s$.

This motivates the study of the following function.  Let~$s$ and~$p$
be integers.  We define~$h_\star(p,s)$ to be the smallest integer~$s'$ such that for some~$p'$, all graphs~$G$ with $\chi^{\ell}_{\star,p}(G)\le s$
satisfy $\col_{\star,p'}(G)\le s'$.  Hence, $h_\star(p,s)\le 2^{14}p^2s^44^s$, and we ask whether~$h_\star(p,s)$ can be bounded from
above by a function of~$s$ only, or by a function of~$p$ and~$s$ that is subexponential in~$s$.  We answer both of these questions
negatively: the first question through Lemma~\ref{lemma-path} and the second one through Corollary~\ref{cor-exp}.

\begin{figure}
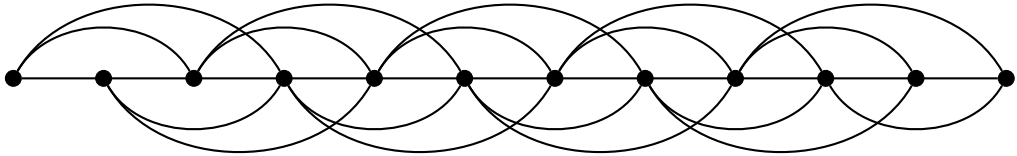

\begin{center}
      \begin{tikzgraph}
\def\l{12mm}
\foreach \i in {0,...,11}
  \draw (\i*\l,\l) node[vertex](u\i){};

\foreach \i in {0,2,...,8}
  \foreach \j in {2,3}
     \draw[edge] let \n1={int(\i+\j)} in (u\i) to[out = 60, in = 120] (u\n1);
\foreach \i in {1,3,...,7}
  \foreach \j in {2,3}
     \draw[edge] let \n1={int(\i+\j)} in (u\i) to[out = -60, in = -120] (u\n1);

\draw[edge] (u9) to[out = -60, in = -120] (u11);
\draw[edge] (u0)--(u11);
\end{tikzgraph}
\end{center}
      \caption{The graph~$P_{12}^3$.}\label{fig-2}
\end{figure}

Let~$P^t_n$ be the $t$-th distance power of a path~$P_n$ on~$n$ vertices, that is, a graph on vertex set $\{1,\dotsc, n\}$
where two vertices~$i$ and~$j$ are adjacent if and only if~$1\le |i-j|\le t$, see Figure~\ref{fig-2}.
Two vertices of~$P^t_n$ are \emph{consecutive} if they are adjacent in~$P_n$.

\begin{lemma}\label{lemma-path}
If~$t$ and~$n$ are positive integers, then
$\chi^\ell_{\star,2t^2}(P^t_n)\le 2$.
      Also, if~$p'$ is a positive integer and~$n \ge p'(t+1)+t+1$,
      then $\col_{\star,p'}(P^t_n)\ge t+1$.  Consequently, for every positive integer~$p$ and every
      integer~$s\ge 2$,
\[
      h_\star(p,s)\ge h_\star(p,2)\ge \lfloor \sqrt{p/2}\rfloor+1.
\]
\end{lemma}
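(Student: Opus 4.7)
The lemma comprises three assertions: a clustered $2$-choosability bound, a coloring-number lower bound, and a bound on $h_\star$. I will sketch the two technical assertions and then combine them.

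For the coloring-number lower bound $\col_{\star,p'}(P^t_n) \ge t+1$, I would fix the induced subgraph $H \coloneqq P^t_n$ itself and show that no $t$-island $I = \{v_1 < v_2 < \cdots < v_m\}$ of $H$ can satisfy $\star(H[I]) \le p'$ when $n \ge p'(t+1)+t+1$. Applying the $t$-island condition to $v_1$: if $v_1 > t$, the $t$ lower neighbors $v_1-t,\ldots,v_1-1$ are all outside $I$, contradicting the $t$-island definition; hence $v_1 \le t$, and symmetrically $v_m \ge n-t+1$. The same reasoning applied to the smallest index $k+1$ for which $v_{k+1}-v_k > t$ forces $v_{k+1}-v_k \le t$ for every $k$, so $H[I]$ is connected and $\star(H[I]) = m$. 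Combining $(m-1)t \ge v_m-v_1 \ge n-2t+1$ with $n \ge p'(t+1)+t+1$ yields $m > p'$, the desired contradiction.

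For the clustered $2$-choosability $\chi^\ell_{\star,2t^2}(P^t_n) \le 2$, given a $2$-list assignment $L$, I would color vertices left to right, choosing at each vertex $v$ with $L(v)=\{a,b\}$ the color $c \in L(v)$ minimizing $\phi_c(v)$, where $\phi_c(v)$ is the size of the existing monochromatic color-$c$ component that $v$ would merge into (namely, the component of the color-$c$ subgraph on $\{1,\ldots,v-1\}$ containing any color-$c$ vertex in the window $\{v-t,\ldots,v-1\}$, or $0$ if no such vertex exists). The main obstacle is to establish the invariant that every monochromatic component stays of size at most $2t^2$. If this invariant first breaks at some vertex $v$ with $\phi_a(v)=\phi_b(v)=2t^2$, two disjoint monochromatic components of size $2t^2$ must each have a representative in the $t$-vertex window $\{v-t,\ldots,v-1\}$; I would trace back through the chain of greedy decisions that produced these two competing components and apply pigeonhole on the fact that each vertex has at most $2t$ neighbors in $P^t_n$ to derive a contradiction, with the slack $2t^2 = 2t \cdot t$ emerging naturally as the product of a backtracking depth and a local branching factor.

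Finally, for the bound on $h_\star(p,s)$: the parameter $\chi^\ell_{\star,\cdot}$ is nonincreasing in its second argument, and $h_\star(p,\cdot)$ is nondecreasing in $s$ (a larger class of graphs must be bounded). Setting $t \coloneqq \lfloor \sqrt{p/2}\rfloor$ so that $2t^2 \le p$, the first assertion gives $\chi^\ell_{\star,p}(P^t_n) \le \chi^\ell_{\star,2t^2}(P^t_n) \le 2$ for every $n$, while the second gives $\col_{\star,p'}(P^t_n) \ge t+1$ for all $n \ge p'(t+1)+t+1$. Hence, for every candidate $p'$, a sufficiently long $P^t_n$ witnesses $h_\star(p,2) \ge t+1 = \lfloor\sqrt{p/2}\rfloor+1$, and the inequality extends to every $s \ge 2$ by the monotonicity above.
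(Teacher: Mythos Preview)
Your argument for the lower bound $\col_{\star,p'}(P^t_n)\ge t+1$ is correct, though it proceeds dually to the paper's. The paper fixes a set $I$ with $|I|\le p'$ and uses a pigeonhole count on the $n\ge p'(t+1)+t+1$ vertices to locate a run of at least $t+1$ consecutive vertices outside $I$; the vertex of $I$ bordering that run then has $t$ outside neighbours. You instead start from a $t$-island $I$, deduce that consecutive elements differ by at most $t$ and that the extreme elements lie within $t$ of the ends, and conclude $|I|>p'$. Both work, and both are equally short. Your derivation of the $h_\star$ bound from the two parts is also fine.

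The gap is in the choosability part. You propose a left-to-right greedy rule and then say the ``main obstacle'' is to show every monochromatic component stays of size at most $2t^2$, offering only the phrase ``trace back through the chain of greedy decisions and apply pigeonhole.'' That is not an argument; it is a statement of what one hopes will happen. In particular, when $\phi_a(v)=\phi_b(v)=2t^2$, each of the two competing components need only touch the window $\{v-t,\dots,v-1\}$ at a single vertex, and nothing you wrote prevents both of them from having grown to size $2t^2$ via earlier forced choices. The factorisation $2t^2=2t\cdot t$ does not by itself yield an invariant, and no concrete pigeonhole is identified. Contrast this with the paper's proof, which is entirely constructive and avoids any inductive invariant: partition the path into blocks of $t(t+1)$ consecutive vertices, split each block into $t{+}1$ tuples $T^k_0,\dots,T^k_t$ of $t$ consecutive vertices, $L$-colour $T^k_0$ arbitrarily, and then colour each $T^k_i$ avoiding the colour of the $i$-th vertex of $T^k_0$. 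This guarantees that in every block some $t$-tuple misses each colour, which (since a run of $t$ missing vertices disconnects $P^t_n$) forces every monochromatic component to lie in at most two blocks and to miss a $t$-tuple in each, giving the $2t^2$ bound directly. You should either supply an actual proof that your greedy rule satisfies the invariant, or replace this part with a block-style construction.
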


\begin{proof}
      To prove that $\chi^\ell_{\star,2t^2}(P^t_n)\le 2$, we may assume
without loss of generality that~$n$ is a multiple of~$t(t+1)$. Let~$L$
      be a $2$-list assignment for~$P_n^t$. We
      split~$P^t_n$ into blocks~$B_1,\dotsc,B_m$ of~$t(t+1)$ consecutive vertices and we $L$-color each
      block independently.  To color the block~$B_k$, we further split it into $t$-tuples of
      consecutive vertices~$T_0^k,T_1^k,\dotsc,T_t^k$.
      (Notice, for later, that there is no edge between~$T_i^k$ and~$T_j^k$
      whenever~$|i-j|>1$.) 
      First we $L$-color arbitrarily the vertices
      in $T_0^k$ and then for each~$i\in\{1,\dotsc,t\}$ we $L$-color the vertices in~$T_i^k$ without using
      the color of the $i$-th vertex of~$T_0^k$. This can be achieved as every list has size at least~$2$.
      Now consider a monochromatic component~$H$ of~$P^t_n$. In each block~$B_k$,
      there is a $t$-tuple~$T_i^k$ that is disjoint from~$V(H)$. Indeed,
      if~$V(H)$ intersects~$T_0^k$, then the color used
      on the vertices in~$H$ is not assigned to the vertices in one of the other $t$-tuples of~$B_k$.
      This property further implies that~$H$ is contained in at most two blocks. 
      We thus infer that~$H$ has at most~$2t^2$ vertices. 

      Now let~$p'$ be a positive integer and further assume that $n\ge p'(t+1)+t+1$.
Let~$I$ be a non-empty subset of vertices of~$P^t_n$ of size at most~$p'$.
      Let~$W$ be a set of consecutive vertices of~$P^t_n$ of maximum size
      under the constraint that $W \cap I = \varnothing$. The assumption on~$n$ implies that
      $|W| \geq t+1$. It follows that there exists a vertex~$v$ in~$I$
      that has at least $t$ neighbors contained in~$W$
      (simply consider a vertex directly preceding or following~$W$ in the order given
      by the path~$P_n$).
      Therefore~$I$ is not a $t$-island, and thus $\col_{\star,p'}(P^t_n)\ge t+1$.
\end{proof}

As for the exponential (in~$s$) lower bound on~$h_\star(p,s)$,
we combine the probabilistic construction by Alon and Krivelevich~\cite{AlKr98}
with the following observation.

\begin{lemma}\label{lemma:coldens}
Let~$p$ be an integer and~$\alpha$ a non-negative real number. Let~$G$ be a graph such that every subgraph~$H$ of~$G$
with at most~$p$ vertices has at most~$\alpha|V(H)|$ edges.  Then $G$ has average degree less than
$2(\col_{\star,p}(G)+\alpha)$.
\end{lemma}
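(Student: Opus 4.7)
The plan is to iterate the defining property of $\col_{\star,p}$, peeling off $s$-islands one at a time and accounting for the edges destroyed at each step. Set $s \coloneqq \col_{\star,p}(G)$ and $G_0 \coloneqq G$. For each $j \ge 1$ for which $G_{j-1}$ is non-empty, the definition of $\col_{\star,p}$ applied to the induced subgraph $G_{j-1}$ provides an $s$-island $I_j$ of $G_{j-1}$ with $\star(G_{j-1}[I_j]) \le p$; set $G_j \coloneqq G_{j-1} - I_j$. The process terminates with $V(G) = I_1 \cup \dotsb \cup I_m$ being a partition, so $\sum_j |I_j| = |V(G)|$.

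Next I would bound the number of edges of $G_{j-1}$ incident to $I_j$. Since $I_j$ is an $s$-island, each vertex of $I_j$ has strictly fewer than $s$ (hence at most $s-1$) neighbors in $V(G_{j-1}) \setminus I_j$, so the number of edges of $G_{j-1}$ with exactly one endpoint in $I_j$ is at most $(s-1)|I_j|$. For the edges inside $G_{j-1}[I_j]$, the hypothesis $\star(G_{j-1}[I_j]) \le p$ means that every connected component $C$ of $G_{j-1}[I_j]$ satisfies $|V(C)| \le p$, so the assumption on $G$ gives $|E(C)| \le \alpha |V(C)|$. Summing over the components of $G_{j-1}[I_j]$ yields $|E(G_{j-1}[I_j])| \le \alpha |I_j|$. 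Together these two contributions show that at most $(s - 1 + \alpha)|I_j|$ edges of $G_{j-1}$ are incident to $I_j$.

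Since every edge of $G$ is destroyed at exactly one step of the peeling process, summing over $j$ yields $|E(G)| \le (s - 1 + \alpha)|V(G)|$, and therefore the average degree of $G$ is at most $2(s-1+\alpha)$, which is strictly less than $2(s+\alpha)$ as required. There is no real obstacle here; the only point deserving attention is that the strict inequality ``less than $s$'' in the definition of an $s$-island upgrades to the at-most bound $s-1$ because $s$ is an integer, and this integrality is what produces the strict inequality stated in the lemma.
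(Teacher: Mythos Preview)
Your proof is correct and follows essentially the same approach as the paper: peel off $s$-islands (the paper phrases this as an induction on $|V(G)|$ rather than an explicit iteration), bound the crossing edges by $(s-1)|I|$ and the internal edges by $\alpha|I|$, and sum. The only cosmetic difference is that the paper peels off a single component of the island at each step (noting that a component of an $s$-island is again an $s$-island, so the hypothesis applies directly to it), whereas you remove the whole island and sum the edge bound over its components; the arithmetic is identical.
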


\begin{proof}
      Set~$s \coloneqq \col_{\star,p}(G)$.
      We prove by induction on the number of vertices of~$G$ that
\[
      |E(G)| \leq |V(G)| \cdot (\alpha + s-1),
\]            
      the statement being trivially true if~$V(G)$ is empty. Suppose that $V(G)$ is not empty;
      then~$G$ has an~$s$-island inducing a subgraph with all components of size at most~$p$.
      Let~$I$ be the vertex set of one such component, and note that~$I$ is also an $s$-island.
      Since $|I|\le p$, we have $|E(G[I])| \leq \alpha|I|$ by the assumptions.
      Since $I$ is an $s$-island, there are at most $(s-1)|I|$ edges between~$I$ and~$V(G)\setminus I$.
     Consequently $|E(G)| \leq |E(G-I)| + |I| \cdot (\alpha + s-1)$, and since $|E(G-I)|\le |V(G-I)| \cdot (\alpha + s-1)$
     by the induction hypothesis, we have $|E(G)| \leq |V(G)| \cdot (\alpha + s-1)$.

     Consequently, the average degree of $G$ is
     $$2|E(G)|/|V(G)|\le 2(\alpha+s-1)<2(\col_{\star,p}(G)+\alpha).$$
\end{proof}

We use Lemma~\ref{lemma:coldens} in conjunction with a result of Alon
and Krivelevich~\cite[Proposition~2.2]{AlKr98}. Let~$\GG(n,n,p)$ be
the probability space of bipartite graphs~$G$ with both parts of
their bipartition of order~$n$, where the probability of obtaining
any given bipartite graph~$G$ with~$n$ vertices in each part
is~$p^{|E(G)|}(1-p)^{n^2-|E(G)|}$. In other words, each of the~$n^2$
possible edges is chosen to belong to~$G$ with probability~$p$
independently at random.

\begin{lemma}\label{lemma:randens}
For every positive real number~$C$, there exists an integer~$d_0\ge 3$ such that the following holds for every $d\ge d_0$. If~$G\in \GG(n,n,d/n)$, then
w.h.p.\ (as~$n$ goes to infinity), each subgraph~$H$ of~$G$ with at most
$Cn/d$ vertices in each part has at most $\frac{3\log_2 d}{\log_2\log_2 d}|V(H)|$ edges.
\end{lemma}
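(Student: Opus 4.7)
The plan is a first-moment (union bound) argument whose crux is an asymptotic computation justifying the constant~$3$ in the definition of~$\beta\coloneqq\frac{3\log_2 d}{\log_2\log_2 d}$. Since any subgraph~$H$ of~$G$ is a subgraph of the bipartite graph induced by~$V(H)$, it suffices to show that, w.h.p., $G\sim\GG(n,n,d/n)$ contains no pair $(S_1,S_2)$ of subsets (one in each part) with $|S_1|,|S_2|\le Cn/d$ and at least $\lceil\beta(|S_1|+|S_2|)\rceil$ induced edges between them.

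For each pair of positive integers $(k_1,k_2)$ with $k_1,k_2\le Cn/d$, let $X_{k_1,k_2}$ count such configurations with $|S_i|=k_i$; the event is vacuous unless $k_1k_2\ge\beta(k_1+k_2)$, which in particular forces $k_1,k_2>\beta$. A standard union bound yields
$$\mathbb{E}[X_{k_1,k_2}]\le\binom{n}{k_1}\binom{n}{k_2}\binom{k_1 k_2}{\lceil\beta(k_1+k_2)\rceil}\left(\frac{d}{n}\right)^{\lceil\beta(k_1+k_2)\rceil}.$$
Writing $k\coloneqq k_1+k_2$ and applying $\binom{n}{k_1}\binom{n}{k_2}\le\binom{2n}{k}\le(2en/k)^k$, $\binom{m}{r}\le(em/r)^r$, and $k_1 k_2\le k^2/4$, this reduces to
$$\mathbb{E}[X_{k_1,k_2}]\le\left[2e\left(\frac{ed}{4\beta}\right)^{\beta}\left(\frac{k}{n}\right)^{\beta-1}\right]^{k}.$$

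The remaining task is to show that the sum of these bounds over all admissible $(k_1,k_2)$ tends to~$0$ as $n\to\infty$, for $d$ large enough depending on~$C$. The key numerical estimate, direct from the definition of~$\beta$, is $\beta\log(4\beta)=(3+o_d(1))\log d$, whence $(ed/(4\beta))^{\beta}=d^{\beta-3+o_d(1)}$; combined with the hypothesis $k/n\le 2C/d$, the bracketed quantity is at most $d^{-2+o_d(1)}$ in the worst case. I would split the sum at, say, $k=n^{1/2}$. When $k>n^{1/2}$, each term is at most $d^{-(2-o_d(1))n^{1/2}}$, and summing over the at most $n^2$ admissible pairs gives $o(1)$. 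When $2\beta<k\le n^{1/2}$, the sharper inequality $(k/n)^{(\beta-1)k}\le n^{-(\beta-1)k/2}$ yields $\mathbb{E}[X_{k_1,k_2}]\le n^{-\Omega_d(\beta^2)}$ at the smallest admissible~$k$, so summing over the at most~$n$ pairs in this regime still yields $o(1)$. Markov's inequality then concludes.

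The main obstacle is the numerical estimate above: the constant~$3$ in the definition of~$\beta$ is exactly tight, in the sense that $\beta\log(4\beta)$ must exceed $2\log d$ by a quantity diverging with~$d$ in order to overwhelm both the factor $(2C)^{\beta-1}=d^{o_d(1)}$ coming from the constant~$C$ and the remaining polynomial contributions. Once this asymptotic is verified, the rest of the argument is routine bookkeeping.
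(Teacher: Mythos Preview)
The paper does not prove this lemma: it is quoted verbatim as Proposition~2.2 of Alon and Krivelevich~\cite{AlKr98}, with no argument given. Your first-moment/union-bound proof is the standard approach (and is essentially the argument in the cited source), and the computation is carried out correctly. In particular, the reduction to induced bipartite pairs $(S_1,S_2)$, the Vandermonde-type bound $\binom{n}{k_1}\binom{n}{k_2}\le\binom{2n}{k}$, and the crucial asymptotic $\beta\ln\beta=(3+o_d(1))\ln d$ (which is what makes the constant~$3$ in~$\beta$ exactly the right choice to beat the factor $d^{\beta-1}$ coming from $(k/n)^{\beta-1}\le(2C/d)^{\beta-1}$) are all sound.

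Two minor remarks on presentation. First, in the small-$k$ regime you should state explicitly that the bracketed quantity, bounded by $2e(ed/(4\beta))^{\beta}n^{-(\beta-1)/2}$ uniformly for $k\le n^{1/2}$, is less than~$1$ once $n$ is large (depending on~$d$); this is what lets you conclude that every term in that range is at most $n^{-\Omega_d(\beta^2)}$, not just the one with smallest~$k$. Second, the claim that ``the constant~$3$ is exactly tight'' is slightly overstated: any constant strictly larger than~$2$ would suffice for the argument, since one needs $\beta\ln\beta$ to exceed $2\ln d$ by an amount that swallows the $d^{o_d(1)}$ error terms; the value~$3$ is a convenient choice, not a sharp threshold.
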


\noindent
Consequently, the following is true.

\begin{corollary}\label{cor:density}
There exists $d_0\ge 153$ such that the following holds.
Let~$d$ and~$p$ be positive integers. If~$d\ge d_0$ and~$G\in \GG(n,n,d/n)$,
      then w.h.p.\ (as $n$ goes to infinity) $\col_{\star,p}(G)>d/5$.
\end{corollary}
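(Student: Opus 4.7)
The plan is to combine Lemma~\ref{lemma:randens} with Lemma~\ref{lemma:coldens}, together with the standard concentration of $|E(G)|$. Set $\alpha \coloneqq \frac{3\log_2 d}{\log_2\log_2 d}$ and apply Lemma~\ref{lemma:randens} with $C = 1$: we obtain that w.h.p.\ every subgraph~$H$ of~$G$ with at most~$n/d$ vertices in each part of the bipartition satisfies $|E(H)| \le \alpha|V(H)|$.

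Since~$p$ is fixed while~$n$ grows, for all sufficiently large~$n$ we have $p \le n/d$. In particular, every subgraph of~$G$ on at most~$p$ vertices in total is then also a subgraph with at most~$n/d$ vertices in each part, so the edge-density hypothesis of Lemma~\ref{lemma:coldens} is satisfied w.h.p. That lemma then gives that the average degree of~$G$ is less than $2(\col_{\star,p}(G) + \alpha)$. On the other hand, $|E(G)|$ is a binomial random variable with expectation~$dn$, so a standard Chernoff bound yields $|E(G)| \ge (9/10)\,dn$ w.h.p., and therefore the average degree of~$G$ is at least $(9/10)\,d$ w.h.p. Rearranging,
\[
    \col_{\star,p}(G) > \tfrac{9}{20}\,d - \alpha \quad \text{w.h.p.}
\]

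It then suffices to choose $d_0 \ge 153$ large enough so that $\alpha(d) < d/4$ whenever $d \ge d_0$, since this immediately gives $\col_{\star,p}(G) > d/5$. A direct numerical check shows that $d_0 = 153$ already works: indeed $\alpha(153) < 8$ while $153/4 > 38$, and $d \mapsto \alpha(d)/d$ is non-increasing beyond that point. (One should additionally ensure that~$d_0$ exceeds the threshold furnished by Lemma~\ref{lemma:randens} for $C = 1$, enlarging it if necessary — this is the reason for stating $d_0 \ge 153$ rather than an explicit value.) The step requiring the most care is to verify that~$d_0$ does not depend on~$p$; this is automatic in the argument above, since~$p$ enters only via the hypothesis $p \le n/d$, which holds eventually for any fixed~$p$ as $n \to \infty$, and the final arithmetic inequality $\alpha(d) < d/4$ involves~$d$ alone.
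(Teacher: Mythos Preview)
Your proof is correct and follows essentially the same route as the paper: apply Lemma~\ref{lemma:randens} with $C=1$, feed the resulting local sparsity into Lemma~\ref{lemma:coldens} once $n\ge pd$, and compare with a Chernoff lower bound on the average degree. The only cosmetic differences are that the paper uses the cruder estimate $|E(G)|\ge dn/2$ (rather than your $(9/10)dn$) and hence needs the sharper inequality $\alpha(d)<d/20$, which is exactly what forces the constant~$153$.
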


\begin{proof}
      On one hand, the expected number of edges of~$G$ is~$dn$, and
Lemma~\ref{lemma:chernoff} implies that $|E(G)|\ge dn/2$ w.h.p.,
and since $|V(G)|=2n$, the average degree of~$G$ is at least~$d/2$ w.h.p.
On the other hand, Lemma~\ref{lemma:randens} implies that
w.h.p.\ every subgraph~$H$ of~$G$ with at most~$n/d$ vertices
contains at most~$\frac{3\log_2 d}{\log_2\log_2 d}|V(H)|$
edges. Consequently, for any $n\ge pd$, Lemma~\ref{lemma:coldens} applies
to~$G$ with $\alpha=\frac{3\log_2 d}{\log_2\log_2 d}$, which
yields that the average degree of~$G$ is less
than~$2\left(\col_{\star,p}(G)+\frac{3\log_2 d}{\log_2\log_2
d}\right)$. We thus infer that $\col_{\star,p}(G)\ge d/4-\frac{3\log_2
d}{\log_2\log_2 d}$, which is greater than~$d/5$ since~$d\ge153$.
\end{proof}

Alon and Krivelevich~\cite[Theorem~1.1]{AlKr98} proved that w.h.p., graphs in
$\GG(n,n,d/n)$ have choosability $(1+o(1))\log_2 d$.  Together with
Corollary~\ref{cor:density}, this provides an exponential (in~$s$) lower
bound on~$h_\star(p,s)$.

\begin{corollary}\label{cor-exp}
      For any positive integer $s$,
      \[h_\star(1,s)\ge \frac15\cdot2^{(1+o(1))s}.\]
\end{corollary}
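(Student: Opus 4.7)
The plan is to combine the two ingredients that the authors have just assembled: the Alon--Krivelevich bound on the choosability of random bipartite graphs, and Corollary~\ref{cor:density} which forces $\col_{\star,p'}$ to be linear in~$d$. Given an arbitrary~$p'$ and an arbitrary~$\varepsilon>0$, I would pick an integer~$d$ large enough that both (i)~$d\ge d_0$ (the threshold from Corollary~\ref{cor:density}) and (ii)~$(1+\varepsilon)\log_2 d\le s$. Then I would choose $n\ge p'd$ large, sample $G\in\GG(n,n,d/n)$, and show that~$G$ simultaneously witnesses the desired properties with high probability.

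More precisely, the Alon--Krivelevich theorem cited just before the statement gives $\chi^{\ell}(G)=(1+o(1))\log_2 d\le s$ w.h.p., and since $p=1$ forces color classes to be independent sets, $\chi^\ell_{\star,1}(G)=\chi^\ell(G)\le s$ w.h.p. On the other side, Corollary~\ref{cor:density} applied with our chosen~$d$ and~$p'$ guarantees $\col_{\star,p'}(G)>d/5$ w.h.p. By a standard union bound, for~$n$ sufficiently large there exists a single graph~$G$ satisfying both inequalities. Consequently, by the definition of~$h_\star$, we obtain
\[
h_\star(1,s)\;\ge\;\col_{\star,p'}(G)\;>\;\frac{d}{5}\;\ge\;\frac{1}{5}\cdot 2^{s/(1+\varepsilon)}.
\]
Since this holds for every $\varepsilon>0$ and every~$p'$, letting $\varepsilon\to 0$ as $s\to\infty$ yields the exponent $(1+o(1))s$ claimed in the statement.

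The argument is essentially a packaging step, so I do not expect a genuine obstacle beyond keeping track of the quantifiers in the definition of~$h_\star$: one must verify that the \emph{same} graph~$G$ realizes both the upper bound on $\chi^\ell_{\star,1}$ and the lower bound on $\col_{\star,p'}$, which is exactly what the w.h.p.\ union bound ensures. A minor point worth double-checking is that the $o(1)$ term in the Alon--Krivelevich theorem is indeed an additive slack on $\log_2 d$, so that it can be absorbed into the $(1+\varepsilon)$ factor uniformly as~$d$ grows; this allows~$d$ (and hence~$n$) to be chosen large enough for both w.h.p.\ statements to become simultaneously effective.
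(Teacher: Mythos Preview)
Your proposal is correct and follows essentially the same route as the paper's own proof: both combine the Alon--Krivelevich choosability bound for $\GG(n,n,d/n)$ with Corollary~\ref{cor:density}, then take a union bound to obtain a single graph witnessing $\chi^\ell_{\star,1}(G)\le s$ and $\col_{\star,p'}(G)>d/5$ simultaneously for the given~$p'$. Your write-up is in fact a bit more scrupulous with the quantifiers (introducing~$\varepsilon$ explicitly and noting that~$d$ is chosen independently of~$p'$), whereas the paper handles the $o(1)$ informally; the only cosmetic point is that the displayed chain $h_\star(1,s)\ge \col_{\star,p'}(G)$ should really be read after quantifying over all~$p'$, which you do note in the surrounding text.
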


\begin{proof}
Consider a random graph~$G \in \GG(n,n,d/n)$. As reported earlier,
      $\chi_{\star,1}^\ell(G)=\chi^\ell(G) = (1+o(1))\log_2 d$ w.h.p.,
      which means that $d = 2^{(1+o(1))\chi^\ell_{\star,1}(G)}$.
      In addition, Corollary~\ref{cor:density} implies that for any positive integer $p'$, $\col_{\star,{p'}}(G)\ge d/5$ w.h.p.
      Altogether we obtain $\col_{\star,{p'}}(G) \ge 2^{(1+o(1))\chi_{\star,1}^\ell(G)}/5$ w.h.p.
      Since this holds for any~$p'$, we conclude that $h_\star(1,s)\ge \frac15\cdot2^{(1+o(1))s}$.
\end{proof}
Since $h_\star(p,s)\ge h_\star(1,s)$ for all positive integers $p$, this gives another lower bound on~$h_\star(p,s)$.

Let us note here that an inspection of the proof of Alon and
Krivelevich~\cite[Theorem~1.1]{AlKr98} also gives for graphs in $\GG(n,n,d/n)$
and w.h.p., an upper bound on~$\chi^\ell_{\star,p}(G)$ that does not depend on~$p$,
namely~$(1+o(1))\log_2 d$. More generally, the same
phenomenon can be seen in the setting of Theorem~\ref{thm:cdchar} for
graphs of large girth. We use the following well-known observation.

\begin{lemma}\label{lemma-mindeg}
Let~$g$ be an odd integer greater than one and let~$k$ be a positive integer.
Each graph~$H$ of girth at least~$g$ and average degree at least~$2k$ has a component
with more than~$(k-1)^{(g-1)/2}$ vertices.
\end{lemma}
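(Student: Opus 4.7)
The plan is to reduce to the minimum-degree-$k$ setting and then apply a Moore-bound style BFS expansion. First I would observe that the hypothesis ``average degree at least $2k$'' is equivalent to $|E(H)| \ge k|V(H)|$, and the standard greedy procedure of iteratively deleting any vertex whose current degree is below $k$ preserves this inequality (each deletion strips off fewer than $k$ edges and exactly one vertex). So the procedure halts at a non-empty subgraph $H' \subseteq H$ of minimum degree at least $k$, and $H'$ clearly inherits girth at least $g$.

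Next, set $r \coloneqq (g-1)/2 \ge 1$, fix any vertex $v \in V(H')$, and consider the BFS levels $N_0, N_1, \dots, N_r$ of $H'$ rooted at $v$. Because the girth of $H'$ is at least $2r+1$, no cycle of $H'$ can fit inside the radius-$r$ ball around $v$ (its length would be at most $2r$), so the subgraph induced by $N_0 \cup \cdots \cup N_r$ is a tree. In this tree $v$ has $k$ children, and every vertex at depth $i \in \{1,\dots,r-1\}$ has at least $k-1$ children, since its degree in $H'$ is at least $k$ and exactly one of its edges leads back toward $v$. Iterating yields
\[
|N_r| \ge k(k-1)^{r-1} > (k-1)^r = (k-1)^{(g-1)/2},
\]
the strict inequality holding because $k > k-1$ whenever $k \ge 2$. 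Since $N_r$ is entirely contained in the component of $v$ in $H'$, and this component lies inside a component of $H$, we obtain the required large component.

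I do not anticipate a real obstacle: the argument is essentially the classical Moore lower bound in disguise. The only bookkeeping concerns the trivial case $k = 1$, where $(k-1)^{(g-1)/2} = 0$ and one only needs $H$ to have some non-empty component, which follows from the positive average-degree hypothesis (it guarantees at least one edge).
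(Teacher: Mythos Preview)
Your proof is correct and follows essentially the same route as the paper's: pass to a subgraph of minimum degree at least~$k$ (the paper phrases this as taking a minimal subgraph of average degree at least~$2k$, which amounts to your greedy deletion), then run the Moore-type expansion from a vertex~$v$ using the girth hypothesis to obtain at least $k(k-1)^{(g-3)/2}>(k-1)^{(g-1)/2}$ vertices in the component. Your explicit treatment of the boundary case $k=1$ is a small addition that the paper leaves implicit.
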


\begin{proof}
We can assume that~$H$ is connected.
Let~$H'$ be a minimal subgraph of~$H$ of average degree at least~$2k$.
Then~$H'$ has minimum degree at least~$k$. Let~$v$ be any vertex of~$H'$.
Let $H''$ be the subgraph of $H'$ induced by vertices at distance at most~$(g-1)/2$ from~$v$.
Since $H$ has girth at least~$g$, the graph~$H''$ is a tree.  Furthermore, all vertices
of $H''$ at distance less than $(g-1)/2$ from $v$ have degree at least $k$.
Consequently,
\[
|V(H)| \ge |V(H'')|\ge k(k-1)^{(g-3)/2}>(k-1)^{(g-1)/2}.
\]
\end{proof}

Combining the bound on the component size provided by Lemma~\ref{lemma-mindeg}
with Theorem~\ref{thm:minav}, we obtain the following upper bound on the coloring
number.

\begin{theorem}\label{thm:girth}
Let~$s$ and~$p$ be positive integers with~$p\ge 2$. Let~$g$ be an odd integer greater than one.
Set $k\coloneqq1+\lceil p^{2/(g-1)}\rceil$ and $d\coloneqq 2^{16}k^2s^44^s$.
If a graph~$G$ of girth at least~$g$ satisfies $\chi^\ell_{\star,p}(G)\le s$,
then~$\col(G)\le d$.  In particular, if $g\ge 2\lceil\log_2 p\rceil+1$, then $\col(G)\le 9\cdot 2^{16}s^44^s$.
\end{theorem}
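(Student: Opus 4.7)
The plan is to combine Theorem~\ref{thm:minav} with Lemma~\ref{lemma-mindeg}. The factor $2^{16}=4\cdot 2^{14}$ in the definition of~$d$ is chosen precisely so that $d\ge 2^{14}(2k)^2 s^4 4^s$, which lets me invoke Theorem~\ref{thm:minav} with its parameter set to~$2k$ instead of~$k$; the resulting monochromatic subgraph of average degree strictly greater than~$2k$ then satisfies the hypothesis of Lemma~\ref{lemma-mindeg} with its parameter set to~$k$, producing a monochromatic component of size more than $(k-1)^{(g-1)/2}$. The definition $k=1+\lceil p^{2/(g-1)}\rceil$ is then tuned so that $(k-1)^{(g-1)/2}\ge p$, which contradicts the $(\star,p)$-properness of any $L$-coloring.

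I would proceed by contradiction. Suppose $\col(G)\ge d+1$; then some induced subgraph~$G_0$ of~$G$ has minimum degree at least~$d$, and~$G_0$ inherits girth at least~$g$ from~$G$. Applying Theorem~\ref{thm:minav} to~$G_0$ with parameter~$2k$ produces an $s$-list assignment~$L_0$ for~$G_0$ such that every $L_0$-coloring of~$G_0$ contains a monochromatic subgraph of average degree greater than~$2k$. I would then extend~$L_0$ arbitrarily to an $s$-list assignment~$L$ of~$G$ and use the hypothesis $\chi^{\ell}_{\star,p}(G)\le s$ to obtain an $(\star,p)$-proper $L$-coloring~$\varphi$. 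The restriction of~$\varphi$ to~$G_0$ is an $L_0$-coloring, so it produces a monochromatic subgraph~$H$ of~$G_0$, say with color~$c$, whose average degree exceeds~$2k$.

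Since~$H$ is a subgraph of~$G$, it has girth at least~$g$, so Lemma~\ref{lemma-mindeg} supplies a connected component of~$H$ with more than $(k-1)^{(g-1)/2}\ge p$ vertices. Because~$H\subseteq G[\varphi^{-1}(c)]$, the enclosing component of $G[\varphi^{-1}(c)]$ also has more than~$p$ vertices, contradicting the $(\star,p)$-properness of~$\varphi$. Hence $\col(G)\le d$. For the ``in particular'' clause, when $g\ge 2\lceil\log_2 p\rceil+1$ the elementary inequality $p^{1/\lceil\log_2 p\rceil}\le 2$ (valid for $p\ge 2$) yields $p^{2/(g-1)}\le 2$, hence $k\le 3$ and $d\le 2^{16}\cdot 9\cdot s^4 4^s=9\cdot 2^{16} s^4 4^s$.

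The argument is essentially mechanical once the bookkeeping factor of~$2$ between Theorem~\ref{thm:minav}'s ``$k$'' and Lemma~\ref{lemma-mindeg}'s ``$k$'' is noticed. The only point requiring a moment's care is the passage from the monochromatic subgraph~$H$ to the full color class of~$G$, but this is immediate since any connected component of~$H$ is contained in a component of $G[\varphi^{-1}(c)]$ of at least the same size. No deeper obstacle arises: the hard work has already been done in Theorem~\ref{thm:minav}.
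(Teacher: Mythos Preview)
Your proof is correct and follows essentially the same route as the paper: pass to a subgraph of minimum degree at least~$d$, invoke Theorem~\ref{thm:minav} with parameter~$2k$ (using $d=2^{16}k^2s^44^s=2^{14}(2k)^2s^44^s$) to force a monochromatic subgraph of average degree greater than~$2k$, and then apply Lemma~\ref{lemma-mindeg} to extract a monochromatic component of size exceeding $(k-1)^{(g-1)/2}\ge p$. You are in fact slightly more explicit than the paper about the passage from the monochromatic subgraph~$H$ to the color class component and about the computation in the ``in particular'' clause.
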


\begin{proof}
We establish the contrapositive: letting~$G$ be a graph of girth at least~$g$
such that~$\col(G)>d$, we prove that $\chi^\ell_{\star,p}(G)>s$.
By replacing $G$ by a minimal subgraph of~$G$ with coloring number greater than~$d$ if necessary,
we can without loss of generality assume that~$G$ has minimum degree at least~$d$.
Theorem~\ref{thm:minav} then ensures the existence of
an $s$-list assignment~$L$ for~$G$ such
that any $L$-coloring of~$G$ contains a monochromatic subgraph~$H$
of average degree greater than~$2k$. By Lemma~\ref{lemma-mindeg}, the
subgraph~$H$ has more than~$(k-1)^{(g-1)/2}\ge p$ vertices, and since
such a subgraph exists for any $L$-coloring of~$G$, we deduce that
$\chi^\ell_{\star,p}(G) > s$.
\end{proof}

Since~$\col(G)$ bounds the choosability of~$G$, the proof of Theorem~\ref{thm:cdchar} implies 
that $\chi^\ell(G)\le 2^{14}p^2s^44^s$ for every graph~$G$ such that $\chi^\ell_{\star,p}(G)\le s$.  However,
we are not able to find examples of graphs for which~$\chi^\ell$ and~$\chi^\ell_{\star,p}$ would be far apart.
As far as we are aware, the answer to the following question could be positive.

\begin{question}\label{q-clust}
Is~$\chi^\ell(G)$ at most~$p\cdot\chi^\ell_{\star,p}(G)$ for every graph~$G$ and
positive integer~$p$?
\end{question}

Note that for the usual chromatic number, $\chi(G)\le
p\cdot\chi_{\star,p}(G)$ holds, since we can first color~$G$ so that
monochromatic components have size at most~$p$, next replace each color
by~$p$ new colors and use them to properly color the vertices in each
monochromatic components to obtain a proper coloring of~$G$.

Furthermore, Question~\ref{q-clust} is a weakening of
an analogous question for defective coloring (whether $\chi^\ell(G)\le (p+1)\cdot\chi^\ell_{\Delta,p}(G)$),
which has been mentioned as folklore~\cite{Kan13}.  Given Theorems~\ref{thm:cdchar} and~\ref{thm:minav}, it is also
natural to ask the following stronger question.

\begin{question}
Is~$\chi^\ell(G)$ at most~$(p+1)\cdot\chi^\ell_{\mathrm{mad},p}(G)$ for every graph~$G$ and
positive integer~$p$?
\end{question}

Again, it is easy to see that $\chi(G)\le (p+1)\cdot\chi_{\mathrm{mad},p}(G)$ holds, since
every graph of maximum average degree at most $p$ is $(p+1)$-colorable.

\end{document}